\documentclass[12pt,a4paper,reqno]{amsart}
\usepackage[english]{babel}
\usepackage[utf8]{inputenc}
\usepackage{graphicx,tikz}
\usepackage{amssymb, amsmath}
\usepackage{geometry,enumerate,stackrel,mathtools}
\usepackage{enumitem,centernot,color}

\usepackage{multirow}

\usepackage{makecell} 

\usepackage{comment}

\usepackage{tabularx}
\usepackage{booktabs}
\usepackage{array}

\usepackage[colorlinks=true,linkcolor=blue,urlcolor=blue,citecolor=blue]{hyperref}

\newcolumntype{C}{>{\centering\arraybackslash}X}

\newtheorem{theorem}{Theorem}[section]

\newtheorem{lemma}[theorem]{Lemma}

\theoremstyle{definition}

\begin{document}

\title[On sharp constants for equivalent norms on Ces\`aro function spaces]{On sharp constants for infinitely many equivalent norms on Ces\`aro function spaces}

\author[A. Ben Said]{Achraf Ben Said$^{*}$}

\address{Department of Analysis and Applied Mathematics, Complutense University  of Madrid, 28040 Madrid, Spain.}
\email{achbensa@ucm.es}

\author[M. Monsalve-L\'opez]{Miguel Monsalve-L\'opez$^{**}$}

\address{Department of Analysis and Applied Mathematics, Complutense University  of Madrid, 28040 Madrid, Spain.}
\email{migmonsa@ucm.es}

\subjclass[2020]{26D15, 47B37, 46E30}

\keywords{Ces\`aro function spaces, Copson function spaces, Ces\`aro operators, generalized Ces\`aro operator, optimal constants.}

\begin{abstract}
Recent years have seen significant interest in finding equivalent norms on Ces\`aro function spaces together with their optimal equivalence constants. In this work, we continue this line of research by deriving sharp two-sided inequalities relating the $L^p$-norms of $\mathcal{C}_m f$ and $\mathcal{C}_n f$, where $\mathcal{C}_k$ is the generalized Ces\`aro operator of order $k \in \mathbb{N}$ acting on non-negative measurable functions $f$ on $(0,\infty)$. A parallel study is also conducted for the corresponding adjoint operators.
\end{abstract}

\maketitle

\section{Introduction}
Let $\mathcal{M}(\mathbb{R}^+)$ denote the space of measurable functions on $\mathbb{R}^+ = (0,\infty)$, and let $\mathcal{M}^+(\mathbb{R}^+)$ be the subclass of non-negative functions in $\mathcal{M}(\mathbb{R}^+)$. For $m>0$, the \emph{generalized Ces\`aro operator of order $m$}, denoted by $\mathcal{C}_m$, and its dual operator $\mathcal{C}_m^*$ are defined for $f \in \mathcal{M}(\mathbb{R}^+)$ by
\[
\mathcal{C}_m f(x) = \frac{m}{x^m} \int_0^x f(t)(x-t)^{m-1} \, dt, \quad x > 0,
\]
and
\[
\mathcal{C}_m^* f(x) = m \int_x^\infty \frac{f(t)}{t^m} (t-x)^{m-1} \, dt, \quad x > 0,
\]
respectively, provided these integrals exist. By convention, for $m = 0$, both $\mathcal{C}_0$ and $\mathcal{C}_0^*$ are defined as the identity operator $I$, so that
\[
\mathcal{C}_0 f = \mathcal{C}_0^*f  = f.
\]

\smallskip

Recall that for $m > 0$, the classical \emph{Riemann--Liouville fractional integral operator} $J_a^m$ with origin at $a \in \mathbb{R}$ is defined as
\begin{equation*}
    (J_a^m f)(x) = \frac{1}{\Gamma(m)} \int_a^x f(t)(x-t)^{m-1} \, dt, \quad x > a
\end{equation*}
(see, e.g., \cite[Definition~2.1, p.~13]{Kai}), where $\Gamma \colon (0,\infty) \to \mathbb{R}$ denotes the classical \emph{Euler Gamma function}. Setting $a = 0$ yields a direct connection to $\mathcal{C}_m$, namely
\begin{equation*}
    \mathcal{C}_m f(x) = \frac{\Gamma(m+1)}{x^m} (J_0^m f)(x).
\end{equation*}
Analogously, the adjoint $\mathcal{C}_m^*$ connects to the \emph{Weyl fractional integral operator $W^m$ of order $m > 0$} \cite[p.~239]{Miller}, defined on $x > 0$ by
\begin{equation*}
    (W^m f)(x) = \frac{1}{\Gamma(m)} \int_x^\infty f(t)(t-x)^{m-1} \, dt.
\end{equation*}
Indeed, the adjoint formulation takes the form
\begin{equation*}
    \mathcal{C}^*_m f = \Gamma(m+1) W^m \left( \frac{f(\cdot)}{(\cdot)^m} \right).
\end{equation*}

 The generalized Ces\`aro operators $\mathcal{C}_m$ and their adjoints play an important role in operator theory and harmonic analysis and have been widely studied. For instance, a detailed study of the boundedness and a spectral analysis  of these operators on Sobolev--Lebesgue spaces was carried out in~\cite{Liz}. For further studies regarding these operators, the reader is referred to~\cite{X,Boyd, Brown, Mori} and the references therein.

\medskip

The boundedness of the Ces\`aro operator $\mathcal{C}_1$ from $L^p(\mathbb{R}^+)$ into itself is a straightforward consequence of the well-known \emph{Hardy inequality}, see a nice survey about this inequality in~\cite{Kuf}. The following generalized Hardy inequality, see \cite[Theorem~329, p.~245]{HLP}, allows to show the boundedness of Ces\`aro operators $\mathcal{C}_m$ from $L^p(\mathbb{R}^+)$ into itself:

Let $m>0$. For every $f \in L^p(\mathbb{R}^+)$ with $1 < p \le \infty$, the operator $\mathcal{C}_m$ satisfies the bound
\begin{equation}
\label{BoundednessC}
\left( \int_0^\infty \left| \frac{m}{x^m} \int_0^x f(t)(x-t)^{m-1} \, dt \right|^p \, dx \right)^{1/p} 
\le \frac{\Gamma(m+1)\Gamma(1/p')}{\Gamma(m+1/p')} \left( \int_0^\infty |f(x)|^p \, dx \right)^{1/p},
\end{equation}
where $p' = p/(p-1)$ is the conjugate exponent of $p$. Moreover, the constant in \eqref{BoundednessC} is the best possible. 
A closer (and dual) inequality is the following, 
\begin{equation}
\label{BoundednessC*}
\left( \int_0^\infty \left| m \int_x^\infty (t-x)^{m-1} f(t) \, \frac{dt}{t^m} \right|^p \, dx \right)^{1/p} 
\le \frac{\Gamma(m+1)\Gamma(1/p)}{\Gamma(m+1/p)} \left( \int_0^\infty |f(x)|^p \, dx \right)^{1/p},
\end{equation}
which holds for all $f \in L^p(\mathbb{R}^+)$ with $1 \le p < \infty$. Also the constant in inequality \eqref{BoundednessC*} is optimal; see \cite[Theorem~329, p.~245]{HLP}.

\medskip

The classical \emph{Ces\`aro function spaces $\Lambda_1^p(\mathbb{R}^+)$} and the classical \emph{Copson function spaces $\Lambda_1^{*p}(\mathbb{R}^+)$} are defined, respectively, by
\[
\Lambda_1^p(\mathbb{R}^+)
= \bigl\{ f \in \mathcal{M}(\mathbb{R}^+): \|f\|_{\Lambda_1^{p}(\mathbb{R}^+)} = \|\mathcal{C}_1|f|\|_p < \infty \bigr\},
\qquad 1 < p \le \infty,
\]
and
\[
\Lambda_1^{*p}(\mathbb{R}^+)
= \bigl\{ f \in \mathcal{M}(\mathbb{R}^+): \|f\|_{\Lambda_1^{*p}(\mathbb{R}^+)} = \|\mathcal{C}_1^*|f|\|_p < \infty \bigr\},
\qquad 1 \le p < \infty.
\]

\medskip

These spaces are Banach spaces that have been extensively studied; see, for example, \cite{As, B, Jag, Mal, Si2} and the references therein.

\medskip

Note that, in particular, inequality \eqref{BoundednessC} implies that 
\[
L^p(\mathbb{R}^+) \subset \Lambda_1^p(\mathbb{R}^+)
\]
for all $1 < p \le \infty$. However, the reverse inclusion does not hold. Indeed, because the classical Hardy operator $\mathcal{C}_1$ is not invertible on $L^p(\mathbb{R}^+)$ (see \cite{Milman}), there exists no constant $c(p) > 0$ depending solely on $p$ such that the reverse inequality
\begin{equation}
\label{IntroEq3}
\|\mathcal{C}_1 f\|_{p} \ge c(p) \, \|f\|_{p}
\end{equation}
holds in general, even for non-negative functions $f \in L^p(\mathbb{R}^+)$. Interestingly, replacing $\mathcal{C}_1$ with $\mathcal{C}_1 - I$ yields an operator that is both continuous and bounded from below on $L^p(\mathbb{R}^+)$ for every $1 < p < \infty$ (see~\cite{ABS2}). 

\medskip

Nevertheless, if we replace the $L^p(\mathbb{R}^+)$ norm with the $\Lambda_1^p(\mathbb{R}^+)$ norm in \eqref{IntroEq3}, we have the equivalence
\begin{equation}
\label{Equivalence1}
\frac{\Gamma(p+1)^{1/p}}{p-1} \|f\|_{\Lambda_1^p(\mathbb{R}^+)} \le \|\mathcal{C}_1 f\|_{\Lambda_1^p(\mathbb{R}^+)} \le p' \|f\|_{\Lambda_1^p(\mathbb{R}^+)}
\end{equation}
for all $f \in \mathcal{M}^+(\mathbb{R}^+)$ and $1 < p \le \infty$. In the limiting case $p = \infty$ in \eqref{Equivalence1}, the constants in the lower and upper bounds are given by 
\[
\lim_{p \to \infty} \frac{\Gamma(p+1)^{1/p}}{p-1} = \frac{1}{e} \quad \text{and} \quad \lim_{p \to \infty} p' = 1,
\]
respectively. It is worth emphasizing that the constants appearing in \eqref{Equivalence1} are optimal. This line of research was initiated by K.~Le\'snik and L.~Maligranda in \cite{Mal} and subsequently completed by A.~Ben Said, S.~Boza, and J.~Soria in \cite{ABS3}.

\medskip

Observe that since $\mathcal{C}_1$ is a linear injective operator, the functional 
\[
N(f) = \|\mathcal{C}_1(|f|)\|_{\Lambda_1^p(\mathbb{R}^+)}
\]
defines a norm on $\Lambda_1^p(\mathbb{R}^+)$ that is equivalent to its standard norm for all $1<p\leq \infty$, in view of \eqref{Equivalence1}.

\medskip

In recent years, considerable attention has been devoted to the problem of renorming Ces\`aro function spaces. For instance, G. Bennett \cite[Theorem 21.1]{B} established that the spaces $\Lambda_1^p(\mathbb{R}^+)$ and $\Lambda_1^{*p}(\mathbb{R}^+)$ coincide for $1 < p < \infty$. In terms of norms, this equivalence ensures the existence of positive constants $c(p)$ and $C(p)$ such that
\[
c(p) \|f\|_{\Lambda_1^{*p}(\mathbb{R}^+)} \le \|f\|_{\Lambda_1^p(\mathbb{R}^+)} \le C(p) \|f\|_{\Lambda_1^{*p}(\mathbb{R}^+)}
\]
for all $f \in \mathcal{M}^+(\mathbb{R}^+)$ and $1 < p < \infty$. Bennett also provided explicit estimates for some of these equivalence constants. In particular, he proved that
\[
\|f\|_{\Lambda_1^{*p}(\mathbb{R}^+)} \le (p-1)^{1/p} \|f\|_{\Lambda_1^p(\mathbb{R}^+)}
\]
for $1 < p \le 2$, whereas the reverse inequality holds for $2 \le p < \infty$:
\[
(p-1)^{1/p} \|f\|_{\Lambda_1^p(\mathbb{R}^+)} \le \|f\|_{\Lambda_1^{*p}(\mathbb{R}^+)}.
\]
Later, V.~Kolyada completed the work started by G. Bennett established that:
\begin{theorem}{\rm \cite[Theorem 1.1]{Ko2}}
\label{THEOREMAKOLYADA}
Let $1<p<\infty$ and let $f\in \mathcal{M}(\mathbb{R}^+)$. If $1<p\leq 2$, then
\begin{equation} 
\label{K1} 
(p-1)\|f\|_{\Lambda_1^{p}(\mathbb{R}^+)} \leq \|f\|_{\Lambda_1^{*p}(\mathbb{R}^+)} \leq (p-1)^{1/p} \|f\|_{\Lambda_1^{p}(\mathbb{R}^+)}, 
\end{equation} 
 and if $2\leq p<\infty$, then
 \begin{equation}
  \label{K2} 
  (p-1)^{1/p}\|f\|_{\Lambda_1^{p}(\mathbb{R}^+)} \leq \|f\|_{\Lambda_1^{*p}(\mathbb{R}^+)}  \leq (p-1) \|f\|_{\Lambda_1^{p}(\mathbb{R}^+)}.
  \end{equation} 
  Moreover, all constants in \eqref{K1} and \eqref{K2} are the best possible. 
  \end{theorem}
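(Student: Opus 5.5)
The plan is to reduce everything to integral identities for the pair $u=\mathcal C_1 f$ and $v=\mathcal C_1^* f$, and then close the estimates with Young/Hölder-type inequalities. Sharpness is checked separately with explicit extremal families. Since $\|f\|_{\Lambda_1^p}$ and $\|f\|_{\Lambda_1^{*p}}$ depend only on $|f|$, we may assume $f\ge0$. By monotone convergence we may further assume that $f$ is bounded with compact support in $(0,\infty)$, so that all boundary terms below vanish.

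Write $F(x)=\int_0^x f$, $u(x)=F(x)/x$ and $v(x)=\int_x^\infty f(t)\,dt/t$. We have the pointwise relations $(xu)'=f$ and $xv'=-f$. Differentiating $x\,v^p$ and $F^p x^{1-p}$ and integrating gives
\begin{equation*}
\int_0^\infty v^p\,dx=p\int_0^\infty v^{p-1}f\,dx,\qquad \int_0^\infty u^p\,dx=p'\int_0^\infty u^{p-1}f\,dx .
\end{equation*}
Next, substitute $f=(xu)'$ in the first identity and $f=-xv'$ in the second, and integrate by parts once more. This yields the mixed representations
\begin{equation*}
\int_0^\infty v^p\,dx=p(p-1)\int_0^\infty u\,v^{p-2}f\,dx,
\qquad
\int_0^\infty u^p\,dx=p'\int_0^\infty v\,(x u^{p-1})'\,dx .
\end{equation*}
For $p=2$ these already give $\|u\|_2=\|v\|_2$, which matches the theorem since all four constants equal $1$ when $p=2$. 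We will also use the Hardy-type identity $\mathcal C_1(\mathcal C_1^*f)=\mathcal C_1 f+\mathcal C_1^*f$ and its companion $v=\mathcal C_1^*u-u$, which express one quantity linearly through the other.

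For the inequalities with constant $(p-1)^{1/p}$, the first step is to compare $\int v^p$ and $(p-1)\int u^p$ through the mixed representations. We apply Young's inequality with exponents $p/(p-1)$ and $p$ (or $p/(2-p)$ and $p/(p-1)$ when $p<2$) to the products $u\,v^{p-2}f$ and $u^{p-1}f$. The aim is to absorb the mixed term into $\int u^p$ and $\int v^p$. The sign of $p-2$ decides which direction the convexity or concavity of $t\mapsto t^{p-1}$ allows, and this produces the split $1<p\le2$ versus $p\ge2$.

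For the inequalities with constant $p-1$, we use the linear relations $v=\mathcal C_1^*u-u$ and $\mathcal C_1 v=u+v$. Combined with the monotonicity of $xu(x)$, these let us compare the $L^p$ norms of $u$ and $v$, with constant $p-1$ arising as the ratio $(1-\alpha)/\alpha$ at $\alpha=1/p$.

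Optimality is checked on two families.
\begin{itemize}
\item Power functions $f(x)=x^{-\alpha}\chi_{(a,b)}(x)$ with $\alpha\to 1/p$, $a\to0$ and $b\to\infty$. Here $\mathcal C_1 f\approx x^{-\alpha}/(1-\alpha)$ and $\mathcal C_1^* f\approx x^{-\alpha}/\alpha$, so the ratio of norms tends to $p-1$.
\item Spikes $f=\chi_{(1,1+\varepsilon)}$. Here $\|\mathcal C_1 f\|_p^p\sim \varepsilon^p/(p-1)$ and $\|\mathcal C_1^*f\|_p^p\sim\varepsilon^p$, so the ratio tends to $(p-1)^{1/p}$.
\end{itemize}
Which family is extremal for which side follows from comparing $p-1$ with $(p-1)^{1/p}$ on either side of $p=2$.

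The main obstacle is the two inequalities with constant $p-1$. Young's inequality alone loses the sharp constant there, since it is attained only asymptotically by power functions. For these I would first try a Hölder argument in the second mixed representation with the weight chosen to be exact on $x^{-1/p}$. If that fails, I would use a rearrangement/level-function reduction to the case where $u$ is decreasing, and then argue via the monotone dependence of $v$ on the decreasing function $u$.
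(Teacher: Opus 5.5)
You should know first that the paper gives no proof of this statement: it is quoted from Kolyada's paper and used only as background. Your proposal therefore has to stand on its own, and as written it proves only the case $p=2$ and the sharpness part.

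Your preliminaries are correct:
\begin{itemize}
\item the identities $\|v\|_p^p=p\int f v^{p-1}$ and $\|u\|_p^p=p'\int f u^{p-1}$, together with the mixed representations;
\item the relation $\mathcal{C}_1 v=u+v=\mathcal{C}_1^*u$;
\item both extremal families (your spike asymptotics agree with \eqref{Limit1} and \eqref{Limit3} for $m=1$).
\end{itemize}

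\textbf{The Young's-inequality step for the $(p-1)^{1/p}$-inequalities fails.} Your identities give $\|v\|_p^p-(p-1)\|u\|_p^p=p\int f\,(v^{p-1}-u^{p-1})\,dx$, and $u,v$ are not pointwise comparable. Apply Young to the products you name, keeping $f$ to the first power; any other power of $f$ is not controlled by $\|u\|_p,\|v\|_p$, and the extremal $f$ is a spike. For $p\ge2$ this gives $uv^{p-2}\le\frac{1}{p-1}u^{p-1}+\frac{p-2}{p-1}v^{p-1}$, hence $(3-p)\|v\|_p^p\le(p-1)\|u\|_p^p$. That is a non-sharp estimate on the \emph{$(p-1)$-side}, and it is empty for $p\ge3$. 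For $1<p<2$ the analogous step gives $(3-p)\|v\|_p^p\ge(p-1)\|u\|_p^p$, again on the wrong side and not sharp. The loss is built in: such a step is an equality for only one value of $u/v$, whereas on the extremal spike $u/v$ runs through all of $(0,\infty)$ on $(1,1+\varepsilon)$, exactly where $f$ lives.

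\textbf{The $(p-1)$-inequalities are not proved at all.} For these you give only strategies you ``would try'', so this half of the theorem is missing.

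\textbf{A way to close both gaps with your own ingredients.} Apply your first identity to $g=u$, i.e.\ $\|\mathcal{C}_1^*g\|_p^p=p\int g(\mathcal{C}_1^*g)^{p-1}$, and use $\mathcal{C}_1^*u=u+v$. This gives
\[
\int_0^\infty (u+v)^{p-1}\bigl(v-(p-1)u\bigr)\,dx=0 ,
\]
where the boundary terms vanish for bounded compactly supported $f$. Both halves of the theorem then follow by subtracting suitable multiples of this identity and using pointwise inequalities.

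For the $(p-1)^{1/p}$-inequalities, set $E(u,v)=v^p-(p-1)u^p-(u+v)^{p-1}\bigl(v-(p-1)u\bigr)$.
\begin{itemize}
\item Normalizing $u+v=1$, $u=t$, gives $E=(1-t)^p-(p-1)t^p-1+pt$.
\item This vanishes together with its derivative at $t=0$ and vanishes at $t=1$.
\item It is convex-then-concave for $p>2$ and concave-then-convex for $p<2$.
\item Hence $E\ge0$ for $p\ge2$ and $E\le0$ for $p\le2$.
\item Integrating gives $\|v\|_p^p-(p-1)\|u\|_p^p=\int E$, which is the sharp $(p-1)^{1/p}$ bound in each range.
\end{itemize}

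For the $(p-1)$-inequalities, set $c_p=p\bigl(\frac{p-1}{p}\bigr)^{p-1}$ and $D(u,v)=c_p(u+v)^{p-1}\bigl(v-(p-1)u\bigr)-v^p+(p-1)^pu^p$.
\begin{itemize}
\item As a function of $r=v/u$, $D$ has a double zero at $r=p-1$, which is the power-function extremal.
\item Its derivative changes sign only there.
\item Hence $D\ge0$ for $p\ge2$ and $D\le0$ for $1<p\le2$.
\item Integrating gives $\|v\|_p\le(p-1)\|u\|_p$ for $p\ge2$ and the reverse for $p\le2$.
\end{itemize}

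Without this identity, or an equivalent global argument such as a majorization comparison of $\int f v^{p-1}$ with $\int f u^{p-1}$, none of the four inequalities is established for $p\neq2$.
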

  
\medskip

In \cite{ABS3}, A.~Ben Said, S.~Boza, and J.~Soria also proved the following theorem.

\begin{theorem}{\rm \cite[Theorem 2.2]{ABS3}}
\label{Alice100}
Let $1\leq p <\infty$ and let $f \in \mathcal{M}(\mathbb{R}^+)$. Then the following inequalities are sharp:
\begin{equation}
\label{A21}
\Gamma(p+1)^{1/p}\|f\|_{\Lambda_1^{*p}(\mathbb{R}^+)} \leq \|\mathcal{C}_1^*f\|_{\Lambda_1^{*p}(\mathbb{R}^+)}\leq p \|f\|_{\Lambda_1^{*p}(\mathbb{R}^+)}.
\end{equation}
\end{theorem}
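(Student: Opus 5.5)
We need to prove: for $f\ge0$ and $1\le p<\infty$,
\[
\Gamma(p+1)^{1/p}\|\mathcal{C}_1^*f\|_p\le \|\mathcal{C}_1^*\mathcal{C}_1^*f\|_p\le p\|\mathcal{C}_1^*f\|_p ,
\]
with both constants sharp. Throughout we may assume $f\ge 0$ (replace $f$ by $|f|$) and set $g=\mathcal{C}_1^*f$, i.e.
\[
g(x)=\int_x^\infty \frac{f(t)}{t}\,dt .
\]
Then $g$ is non-increasing and absolutely continuous, with $-g'(x)=f(x)/x$. The key identity is
\[
\mathcal{C}_1^*g(x)=\int_x^\infty \frac{g(t)}{t}\,dt=\int_x^\infty \log\frac{t}{x}\,\frac{f(t)}{t}\,dt .
\]
This follows from Fubini, and it is the $m=1$ analogue of the semigroup-type composition of Weyl integrals.

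\textbf{Upper bound.} This is immediate from \eqref{BoundednessC*} with $m=1$: its constant is $\Gamma(2)\Gamma(1/p)/\Gamma(1+1/p)=p$, so applying it to $g\in L^p$ gives $\|\mathcal{C}_1^*g\|_p\le p\|g\|_p$.

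Sharpness is not automatic, because $g$ ranges only over decreasing functions of the form $\mathcal{C}_1^*f$. I would test $f=\chi_{(0,1)}$ and its truncations $f_\varepsilon(t)=t^{-1/p+\varepsilon}\chi_{(0,1)}(t)$. Then $g_\varepsilon(x)\approx c\,x^{-1/p+\varepsilon}$ near $0$. The same extremal sequences that give sharpness of Hardy's inequality, namely powers $x^{-1/p}$ near zero, appear here, so the ratio tends to $p$ as $\varepsilon\to0$. For $p=1$ the constant is $1$, which is trivial.

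\textbf{Lower bound.} I would compute both norms by integrating against $f$, using layer-cake and monotonicity.

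For $p=1$, Fubini gives $\|\mathcal{C}_1^*g\|_1=\|g\|_1$, and $\Gamma(2)=1$.

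For $p>1$, I would write $\|\mathcal{C}_1^*g\|_p^p$ via the derivative formula. Set $h=\mathcal{C}_1^*g$, which is decreasing with $-h'(x)=g(x)/x$. Then
\[
\int_0^\infty h^p\,dx=p\int_0^\infty h^{p-1}(x)\,g(x)\,dx ,
\]
by integration by parts, assuming boundary terms vanish (handled by truncation/approximation). Iterating this identity $\lfloor p\rfloor$ times, and comparing with $h\ge$ a fractional-power expression, suggests the bound $h(x)^{p}\ge \Gamma(p+1)\,(\text{something whose integral is }\|g\|_p^p)$.

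A cleaner route is the following. Since $g$ is decreasing, for $t\ge x$ we have $g(t)\le g(x)$, so we want a lower estimate of $h$ in terms of $g$. The natural tool is a convexity inequality valid for decreasing $g$:
\[
\Big(\int_x^\infty \frac{g(t)}{t}\,dt\Big)^p\ \ge\ p\int_x^\infty \Big(\int_s^\infty\frac{g}{t}\Big)^{p-1}\frac{g(s)}{s}\,ds .
\]
Actually this is an equality by the fundamental theorem of calculus. So
\[
\|h\|_p^p=p\int_0^\infty h(s)^{p-1}g(s)\,ds ,
\]
and the question reduces to bounding $h^{p-1}$ from below in a way that recovers $g^{p-1}$. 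The expected sharp constant $\Gamma(p+1)$ comes from the Laplace transform: substituting $x=e^{-u}$ turns $\mathcal{C}_1^*$ into convolution with $\chi_{(0,\infty)}$ on the line. The problem then becomes the sharp inequality $\|I^2\phi\|_{L^p(e^{-u}du)}\ge\Gamma(p+1)^{1/p}\|I\phi\|_{L^p(e^{-u}du)}$ for $\phi\ge0$, where $I\phi(u)=\int_{-\infty}^u\phi$ is a primitive. I would prove this last inequality by Minkowski's integral inequality in reverse form (valid since the operators are positive), or by Jensen with the weight $e^{-u}$. This reproduces the argument of \cite{ABS3} for \eqref{Equivalence1}, of which the present statement is the dual.

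Sharpness of the lower bound is tested with $f=\chi_{(0,1)}$. Then $g(x)=\log(1/x)\chi_{(0,1)}$ and $h(x)=\tfrac12\log^2(1/x)\chi_{(0,1)}$. Their $p$-th power integrals are $\Gamma(p+1)$ and $2^{-p}\Gamma(2p+1)$, which does not give equality. So one instead uses concentrated functions $f=\chi_{(a,1)}$ and lets $a\to 1$, i.e. nearly a point mass at $t=1$. Then $g\approx\chi_{(0,1)}\,\delta$ and $h\approx\delta\log(1/x)$, giving the ratio $\Gamma(p+1)^{1/p}$.

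\textbf{Main obstacle.} The main obstacle is the rigorous lower bound for $p>1$. I would first try the exponential change of variables and a direct convexity (Jensen) argument.
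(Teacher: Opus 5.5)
Your upper bound is correct: it is \eqref{BoundednessC*} with $m=1$, whose constant is $\Gamma(1/p)/\Gamma(1+1/p)=p$. Both sharpness arguments also work. With $\alpha=1/p-\varepsilon$ one has exactly $h_\varepsilon=\alpha^{-1}g_\varepsilon-\alpha^{-1}\log(1/x)\,\chi_{(0,1)}$ while $\|g_\varepsilon\|_p\to\infty$, so the ratio tends to $p$. The functions $f=\chi_{(a,1)}$ with $a\to1^-$ are the same concentration device as the paper's $\chi_{(1,1+\varepsilon)}$, and they give $\Gamma(p+1)^{1/p}$.

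The genuine gap is the lower bound for $p>1$, which is the real content of the theorem. None of the tools you name proves it:
\begin{itemize}
\item Minkowski's integral inequality has no reverse form for $p\ge1$. Positivity of the kernel does not change the direction $\|\int F(\cdot,s)\,ds\|_p\le\int\|F(\cdot,s)\|_p\,ds$.
\item ``Jensen with the weight $e^{-u}$'' is never specified, and Jensen for the convex power $y\mapsto y^p$ points the wrong way for a lower bound.
\item Calling the statement ``dual'' to the lower bound in \eqref{Equivalence1} does not help, since no duality principle transfers lower estimates on the positive cone to adjoints.
\end{itemize}
After reaching $\|h\|_p^p=p\int_0^\infty h^{p-1}g$, you never say how to bound $h^{p-1}$ from below.

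The missing idea is a pointwise estimate that uses only the fact that $g=\mathcal{C}_1^*f$ is non-increasing. Fix $x>0$ and set $\psi(s)=g(xe^s)$ and $\Psi(s)=\int_0^s\psi$. Then $h(x)=\Psi(\infty)$, and monotonicity of $\psi$ gives $\Psi(s)\ge s\psi(s)$. Since $p-1\ge0$,
\[
h(x)^p=p\int_0^\infty\Psi(s)^{p-1}\psi(s)\,ds\ \ge\ p\int_0^\infty s^{p-1}\psi(s)^p\,ds=p\int_x^\infty\Big(\log\frac tx\Big)^{p-1}g(t)^p\,\frac{dt}{t}.
\]
Integrate in $x$, apply Tonelli, and use $\frac1t\int_0^t\big(\log(t/x)\big)^{p-1}dx=\Gamma(p)$. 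This gives $\|h\|_p^p\ge p\,\Gamma(p)\|g\|_p^p=\Gamma(p+1)\|g\|_p^p$ for all $p\ge1$ at once.

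In your exponential variables the same estimate reads $H(u)^p\ge p\int_{-\infty}^u(u-v)^{p-1}G(v)^p\,dv$ for non-decreasing $G$, and the constant appears as $\int_0^\infty p\,w^{p-1}e^{-w}\,dw=\Gamma(p+1)$. This is in the spirit of the FTC-plus-monotonicity step the paper uses for $2\le p<\infty$ in Theorems~\ref{Theorem3.3.} and~\ref{Theorem3.4.}. Here the exponent is $p-1$ rather than $p-2$, so no case split and no Jensen step are needed.

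Finally, your opening reduction to $f\ge0$ is harmless for the upper bound, because $|\mathcal{C}_1^*f|\le\mathcal{C}_1^*|f|$. It is not harmless for the lower bound, which actually fails for oscillating signed $f$. For example, take $f(t)=t\sin(Nt)\chi_{(1,2)}(t)$ with $N$ large: then $\mathcal{C}_1^*|\mathcal{C}_1^*f|$ has norm $O(1/N)$ while $\|\mathcal{C}_1^*|f|\|_p$ stays bounded below. The statement therefore has to be read for $f\in\mathcal{M}^+(\mathbb{R}^+)$.
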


\medskip

Let $C_p=\int_0^1|\ln(t)+1|^pdt$. As a consequence of a result initiated in \cite{ABS}, A. Ben Said and G. Sinnamon in \cite{AS} obtained the result below:
\begin{theorem}{\rm \cite[Theorem 1.2]{AS}}
Let $1<p<\infty$ and ~$f\in \mathcal{M}(\mathbb{R}^+)$ be such that $\mathcal{C}_1^*|f(x)|<\infty$ for all $x>0$. If $1<p \leq 2$, then
\begin{align}
\label{A1}
(p-1)\|f\|_{\Lambda_1^{*p}(\mathbb{R}^+)} &\leq\|(\mathcal{C}_1^*-I)\mathcal{C}_1^*|f|\|_{p}\leq C_p^{1/p}\|f\|_{\Lambda_1^{*p}(\mathbb{R}^+)},
\end{align}
and if $2\leq p\leq \infty$, then
\begin{align}
\label{A2}
C_p^{1/p}\|f\|_{\Lambda_1^{*p}(\mathbb{R}^+)}&\leq\|(\mathcal{C}_1^*-I)\mathcal{C}_1^*|f|\|_{p}\leq (p-1)\|f\|_{\Lambda_1^{*p}(\mathbb{R}^+)}.
\end{align}
The constants $p-1$ and $C_p^{1/p}$ are optimal in both \eqref{A1} and \eqref{A2}.
\end{theorem}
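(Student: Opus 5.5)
Set $h=|f|$ and $g=\mathcal{C}_1^*h$, so that $\|f\|_{\Lambda_1^{*p}(\mathbb{R}^+)}=\|g\|_p$. The plan is to rewrite the statement as a two-sided estimate for one operator acting on the cone of non-negative non-increasing functions, identify the two extremal families, and then prove that these families are extremal.

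\emph{Reduction.} Since $g(x)=\int_x^\infty h(s)\,ds/s$, we have $g\ge 0$, $g$ is non-increasing, and $g(\infty)=0$. By Fubini,
\[
(\mathcal{C}_1^*-I)g(x)=\int_x^\infty \frac{h(s)}{s}\Bigl(\ln\frac{s}{x}-1\Bigr)ds .
\]
Writing $G=\mathcal{C}_1^*g$ gives $xG'=-g$, and hence
\[
(\mathcal{C}_1^*-I)g=G+xG'=(xG)'.
\]
Conversely, every non-increasing $g\ge0$ with $g(\infty)=0$ arises in this way from some $h\ge 0$ (take $h=-s\,dg(s)$, interpreted in the Stieltjes sense). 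So the theorem is equivalent to sharp two-sided bounds for the ratio $\|\mathcal{C}_1^*g-g\|_p/\|g\|_p$ over this cone. The ratio is invariant under the dilations $g\mapsto g(\lambda\,\cdot)$.

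\emph{The two extremal families.} For $g=\chi_{(0,r)}$ we get $\mathcal{C}_1^*g-g=(\ln(r/x)-1)\chi_{(0,r)}$. The substitution $x=ru$ gives
\[
\|\mathcal{C}_1^*g-g\|_p^p=r\,C_p \quad\text{and}\quad \|g\|_p^p=r,
\]
so the ratio equals $C_p^{1/p}$. For $g(x)=x^{-a}$ we have $\mathcal{C}_1^*g=g/a$, so $\mathcal{C}_1^*g-g=(1/a-1)g$ pointwise. Truncating $g$ at $\varepsilon$ and $1/\varepsilon$ and letting $a\downarrow 1/p$, $\varepsilon\to0$ should make the ratio tend to $p-1$; this gives sharpness of $p-1$ once the truncation errors are shown to be negligible. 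At $p=2$ both constants equal $1$, since $C_2=\int_0^1(\ln t+1)^2\,dt=1$. So I expect an exact identity $\|\mathcal{C}_1^*g-g\|_2=\|g\|_2$ on the cone, which I would verify by expanding the square and integrating by parts with $xG'=-g$.

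\emph{Main inequality and main obstacle.} The hard step is to show that, for $1<p\le2$, the ratio lies between the power value $p-1$ and the indicator value $C_p^{1/p}$, and that the order reverses for $p\ge2$. For this I would use the layer-cake representation $g=\int\chi_{(0,r)}\,d\mu(r)$ with $\mu\ge0$, which gives
\[
\mathcal{C}_1^*g-g=\int\varphi(\cdot/r)\,d\mu(r),\qquad \varphi(u)=(-\ln u-1)\chi_{(0,1)}(u).
\]

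\begin{enumerate}
\item For the indicator-side bound I would try a Minkowski/convexity argument in the $d\mu$ variable combined with the dilation invariance, comparing against $\|g\|_p^p$ written in the same layer form. Because $\varphi$ changes sign, a naive triangle inequality is lossy. I would therefore work instead with the identity $(\mathcal{C}_1^*-I)g=(xG)'$ and a Kolyada-type argument: express $\|g\|_p^p$ and $\|(xG)'\|_p^p$ via integration by parts against $|\,\cdot\,|^{p-2}(\cdot)$, and use the elementary inequality comparing $|a+b|^p$ with $|a|^p$ plus linear terms, whose direction flips at $p=2$. This mirrors how Theorem~\ref{THEOREMAKOLYADA} and Theorem~\ref{Alice100} switch constants at $p=2$.
\item For the power-side bound ($p-1$), I would use a weighted Hardy-type estimate via the Mellin transform. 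Multiplication by $(1/a-1)$ on $x^{-a}$ indicates that the Mellin symbol of the operator along the critical line $\Re a=1/p$ governs the bound, with the monotonicity of $g$ selecting the endpoint value $p-1$.
\end{enumerate}

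I expect the sign change of $\varphi$ to be the main technical difficulty throughout. The case $p=\infty$ in \eqref{A2} will then follow by letting $p\to\infty$.
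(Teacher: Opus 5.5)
The paper gives no proof of this statement (it is quoted from Ben Said--Sinnamon), so your proposal has to stand on its own, and it does not.

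\textbf{What you do establish.} You reduce to the cone $g=\mathcal{C}_1^*|f|$ and you prove optimality. The test function $\chi_{(0,r)}$ gives $C_p^{1/p}$. The family $\min\{1,x^{-a}\}$ with $a\downarrow 1/p$ gives $p-1$: the truncation contributes $O(1)$, while $\|x^{-a}\chi_{(1,\infty)}\|_p^p=(ap-1)^{-1}\to\infty$. The case $p=2$ is also routine, since $\|\mathcal{C}_1^*g\|_2^2=2\langle\mathcal{C}_1^*g,g\rangle$ for $g\in L^2$.

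\textbf{What is missing.} The four inequalities for $p\neq 2$ are the whole content of the theorem, and you replace them with techniques you ``would try''. Those techniques do not work as described.
\begin{enumerate}
\item Minkowski's inequality in the layer variable gives $\|(\mathcal{C}_1^*-I)g\|_p\le C_p^{1/p}\int r^{1/p}\,d\mu(r)$. But $\int r^{1/p}\,d\mu(r)$ is not controlled by $\|g\|_p$ at all. For $g=\min\{1,x^{-a}\}$ it equals $a/(a-1/p)$, while $\|g\|_p\asymp(ap-1)^{-1/p}$ as $a\downarrow 1/p$. So the sign change of $\varphi$ is not the only obstruction.
\item What is really needed is a nonlinear comparison between $\|\int\varphi(\cdot/r)\,d\mu(r)\|_p^p$ and $\|\int\chi_{(0,r)}\,d\mu(r)\|_p^p$. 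The Kolyada-type step supplies none: it gives no identity, no specific elementary inequality, and no reason why exactly $C_p$ would emerge.
\item The Mellin route cannot work. The modulus of the symbol $1/s-1$ on $\operatorname{Re}s=1/p$ lies between $p-1$ and $1$, but it controls $L^p$ norms only when $p=2$. The principle that the norm equals the symbol at the real point $s=1/p$ holds for non-negative kernels, and that is exactly what $\mathcal{C}_1^*-I$ lacks. For $1<p\le 2$ you need a lower bound, which no symbol principle provides. Finally, nothing on the Mellin side encodes monotonicity of $g$, so ``monotonicity selects the endpoint value'' has no mechanism behind it.
\end{enumerate}

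\textbf{A missing hypothesis.} Any proof of the left-hand inequalities must use $\|f\|_{\Lambda_1^{*p}(\mathbb{R}^+)}<\infty$, and your reduction does not. Pointwise finiteness of $\mathcal{C}_1^*|f|$ is not enough, because $x^{-1}$ is an eigenfunction of $\mathcal{C}_1^*$ with eigenvalue $1$ (your power family at $a=1$). For instance, take $f(s)=s^{-1}\chi_{(0,1)}(s)+2s^{-2}\chi_{[1,\infty)}(s)$. Then $g=\mathcal{C}_1^*f=\min\{x^{-1},x^{-2}\}$ and $(\mathcal{C}_1^*-I)g=-\tfrac12\min\{1,x^{-2}\}$. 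So the middle term in \eqref{A1}--\eqref{A2} is finite while $\|f\|_{\Lambda_1^{*p}(\mathbb{R}^+)}=\infty$ for every $p$.

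In your $p=2$ computation this appears as the boundary term $xG(x)^2$ at $0$, which vanishes only because $g\in L^2$. The same example, together with the fact that both constants are infinite at $p=\infty$, shows that ``letting $p\to\infty$'' gives no meaningful endpoint statement. The result should be read for $1<p<\infty$ and $f\in\Lambda_1^{*p}(\mathbb{R}^+)$.
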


\medskip

For each $m > 0$,  natural generalizations of the Ces\`aro function spaces $\Lambda_1^p(\mathbb{R}^+)$ and the Copson function spaces $\Lambda_1^{*p}(\mathbb{R}^+)$ are the function spaces $\Lambda_m^p(\mathbb{R}^+)$ and $\Lambda_m^{*p}(\mathbb{R}^+)$, defined respectively by
\[
\Lambda_m^p(\mathbb{R}^+) = \bigl\{ f \in \mathcal{M}(\mathbb{R}^+) : \|f\|_{\Lambda_m^p(\mathbb{R}^+)} = \|\mathcal{C}_m|f|\|_p < \infty \bigr\}, \qquad 1 < p \le \infty,
\]
and
\[
\Lambda_m^{*p}(\mathbb{R}^+) = \bigl\{ f \in \mathcal{M}(\mathbb{R}^+) : \|f\|_{\Lambda_m^{*p}(\mathbb{R}^+)} = \|\mathcal{C}_m^*|f|\|_p < \infty \bigr\}, \qquad 1 \le p < \infty.
\]

\medskip

The main goal of this paper is to establish the existence of positive constants $c, C, k, K$ (depending only on $m, n \in \mathbb{N}$ and $p$) such that 
\begin{equation}
\label{IneqGoal1}
c \, \|f\|_{\Lambda_n^p(\mathbb{R}^+)} \le \|f\|_{\Lambda_m^p(\mathbb{R}^+)} \le C \, \|f\|_{\Lambda_n^p(\mathbb{R}^+)}
\end{equation}
and
\begin{equation}
\label{IneqGoal2}
k \, \|f\|_{\Lambda_n^{*p}(\mathbb{R}^+)} \le \|f\|_{\Lambda_m^{*p}(\mathbb{R}^+)} \le K \, \|f\|_{\Lambda_n^{*p}(\mathbb{R}^+)}
\end{equation}
for all $f \in \mathcal{M}(\mathbb{R}^+)$. Furthermore, we aim to determine the optimal values for $c, C, k,$ and $K$ in \eqref{IneqGoal1} and \eqref{IneqGoal2}.

Note that \eqref{IneqGoal1} and \eqref{IneqGoal2} imply, respectively, the space equalities
\begin{equation}
\label{Inclusion1}
\Lambda_m^p(\mathbb{R}^+) = \Lambda_n^p(\mathbb{R}^+) \quad \text{for } 1 < p \le \infty
\end{equation}
and
\begin{equation}
\label{Inclusion2}
\Lambda_m^{*p}(\mathbb{R}^+) = \Lambda_n^{*p}(\mathbb{R}^+) \quad \text{for } 1 \le p < \infty
\end{equation}
for any $m, n \in \mathbb{N}$. Combining \eqref{Inclusion1} and \eqref{Inclusion2} with Theorem~\ref{THEOREMAKOLYADA}, we deduce that
\[
\Lambda_m^p(\mathbb{R}^+) = \Lambda_n^p(\mathbb{R}^+) = \Lambda_r^{*p}(\mathbb{R}^+) = \Lambda_s^{*p}(\mathbb{R}^+)
\]
for all $1 < p < \infty$ and all $m, n, r, s \in \mathbb{N}$.

\medskip

This paper is organized as follows. Section~\ref{MainResults} presents our main results, whose formal proofs are detailed in Section~\ref{ProofsMainResults}. Section~\ref{Preliminaries} collects the necessary preliminary concepts and technical tools. Finally, Section~\ref{FurtherComments} summarizes all constants derived in this study in tabular form and poses a conjecture regarding those that remain open.

\section{Main Results}
\label{MainResults}

Let $B$ denote the Beta function, defined for $\operatorname{Re}(z_1), \operatorname{Re}(z_2) > 0$ by 
\[
B(z_1,z_2) = \int_0^1 v^{z_1-1} (1-v)^{z_2-1} \, dv = \frac{\Gamma(z_1)\Gamma(z_2)}{\Gamma(z_1+z_2)}.
\]

The following theorem establishes the continuous embeddings 
\[
\Lambda_n^p(\mathbb{R}^+) \hookrightarrow \Lambda_{n+k}^p(\mathbb{R}^+) \quad \text{for all } 1 < p \le \infty
\]
and
\[
\Lambda_n^{*p}(\mathbb{R}^+) \hookrightarrow \Lambda_{n+k}^{*p}(\mathbb{R}^+) \quad \text{for all } 1 \le p < \infty,
\]
for every $n, k \in \mathbb{N}$, while also determining the exact norms of these inclusion operators.
\begin{theorem}
\label{Theorem 3.1.}
Let $1 \leq  p \le \infty$, $n, k \in \mathbb{N}$, and $f \in \mathcal{M}^+(\mathbb{R}^+)$. If $1<p\leq \infty$, then
\begin{equation}
\label{RelationCmCn}
\|\mathcal{C}_{n+k}f\|_p \le \frac{B\big(n + \frac{1}{p'}, k\big)}{B(n+1, k)} \|\mathcal{C}_{n}f\|_p
\end{equation}
and if $1\leq p < \infty$, then
\begin{equation}
\label{RelationC*m*Cn}
\|\mathcal{C}^*_{n+k}f\|_p \le \frac{B\big(n + \frac{1}{p}, k\big)}{B(n+1, k)} \|\mathcal{C}^*_{n}f\|_p.
\end{equation}
Furthermore, the constants in \eqref{RelationCmCn} and \eqref{RelationC*m*Cn} are sharp.
\end{theorem}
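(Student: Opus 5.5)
The plan is to write $\mathcal{C}_{n+k}$ as an average of dilations applied to $\mathcal{C}_n f$, and then apply Minkowski's integral inequality. Since $J_0^{n+k}=J_0^kJ_0^n$ and $J_0^n f(t)=t^n\,\mathcal{C}_nf(t)/\Gamma(n+1)$, the substitution $t=xv$ gives, for $f\in\mathcal{M}^+(\mathbb{R}^+)$,
\[
\mathcal{C}_{n+k}f(x)=\frac{1}{B(n+1,k)}\int_0^1 (1-v)^{k-1}v^{n}\,\mathcal{C}_nf(xv)\,dv =: A_{n,k}(\mathcal{C}_nf)(x).
\]
Tonelli justifies all interchanges because $f\ge 0$. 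For $g\ge 0$ we have $\|g(\cdot\, v)\|_p=v^{-1/p}\|g\|_p$. Minkowski's inequality therefore gives
\[
\|\mathcal{C}_{n+k}f\|_p\le \frac{1}{B(n+1,k)}\int_0^1(1-v)^{k-1}v^{n-1/p}\,dv\,\|\mathcal{C}_nf\|_p ,
\]
and the integral equals $B(n+1/p',k)$. This is \eqref{RelationCmCn}, including $p=\infty$.

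For \eqref{RelationC*m*Cn} I use duality. Both $\mathcal{C}_n$ and $A_{n,k}$ commute with the dilations $g\mapsto g(\lambda\,\cdot)$, and $A_{n,k}$ is a superposition of such dilations. Hence $A_{n,k}\mathcal{C}_n=\mathcal{C}_nA_{n,k}$ on non-negative functions, which I would verify directly by Fubini. Taking formal adjoints with respect to $\int_0^\infty fg$ then yields
\[
\mathcal{C}^*_{n+k}f=A^*_{n,k}\,\mathcal{C}_n^*f,\qquad A^*_{n,k}h(x)=\frac{1}{B(n+1,k)}\int_0^1(1-v)^{k-1}v^{n-1}h(x/v)\,dv .
\]
The adjoint of the dilation $g\mapsto g(v\,\cdot)$ is $h\mapsto v^{-1}h(\cdot/v)$. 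Since $\|h(\cdot/v)\|_p=v^{1/p}\|h\|_p$, Minkowski gives the constant
\[
\frac{1}{B(n+1,k)}\int_0^1(1-v)^{k-1}v^{n-1+1/p}\,dv=\frac{B(n+1/p,k)}{B(n+1,k)}.
\]
As a safeguard, I would also check this identity directly from the kernel by computing $\int_x^{t}(s-x)^{k-1}$-type convolutions.

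Sharpness is the main obstacle. I would use near-extremal power functions, for which the Minkowski step is almost an equality because $g(xv)\approx v^{-1/p}g(x)$ in the relevant range.

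For \eqref{RelationCmCn} with $1<p<\infty$, take $f_\varepsilon(t)=t^{-1/p-\varepsilon}\chi_{(1,\infty)}(t)$. Then $\mathcal{C}_nf_\varepsilon(x)$ equals $c_\varepsilon x^{-1/p-\varepsilon}$ plus lower-order terms as $x\to\infty$, where $c_\varepsilon\to B(n+1,1/p')\,n$-type constants. Consequently $\|\mathcal{C}_nf_\varepsilon\|_p^p\sim c\,\varepsilon^{-1}$. Moreover, the $L^p$ norm of $A_{n,k}\mathcal{C}_nf_\varepsilon$ concentrates where $x$ is large, and there it is $\int(1-v)^{k-1}v^{n-1/p-\varepsilon}dv/B(n+1,k)$ times the same power. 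Taking the ratio and letting $\varepsilon\to0$ should recover the constant. The care needed is in showing that the truncation errors are $o(\varepsilon^{-1/p})$ in norm.

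The case $p=\infty$ is trivial with $f\equiv 1$, since $\mathcal{C}_m1=1$ and $B(n+1,k)/B(n+1,k)=1$.

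For \eqref{RelationC*m*Cn}, I use the mirror test function $f_\varepsilon(t)=t^{-1/p+\varepsilon}\chi_{(0,1)}(t)$. The mass now concentrates near $0$, and the analogous asymptotic computation applies; this also covers $p=1$.
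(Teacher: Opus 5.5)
Your proposal is correct. It reaches the result by a somewhat different route from the paper, mainly in the sharpness part.

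\textbf{Upper bounds.} The mechanism is the same as the paper's: Minkowski's integral inequality applied to an average of dilations. The paper only proves the one-step identities \eqref{Identity1}--\eqref{Identity2}, obtains $\|\mathcal{C}_{m+1}f\|_p\le \frac{m+1}{m+1/p'}\|\mathcal{C}_mf\|_p$ (and its dual), and telescopes the Gamma ratios through Lemma~\ref{ElementaryLemma}. You instead use the semigroup property $J_0^{n+k}=J_0^kJ_0^n$ to write $\mathcal{C}_{n+k}f$ as a single Beta-weighted dilation average of $\mathcal{C}_nf$. You also get the adjoint formula from commutation with dilations plus duality, rather than from a direct Tonelli computation. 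Both routes give the same constant. Your identity $\mathcal{C}^*_{n+k}=A^*_{n,k}\mathcal{C}^*_n$ is right: a direct kernel computation produces the factor $nB(n,k)/B(n+1,k)=n+k$.

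\textbf{Sharpness.} This is where the routes really differ. The paper uses no test functions. Chaining the one-step bounds from $\mathcal{C}_0=I$ up to $\mathcal{C}_{n+k}$ yields exactly the optimal constant $\Gamma(n+k+1)\Gamma(1/p')/\Gamma(n+k+1/p')$ of \eqref{BoundednessC} (resp.\ the optimal constant of \eqref{BoundednessC*}). Any improvement of the $n\to n+k$ constant would therefore improve that known sharp constant, a contradiction.

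Your truncated powers $t^{-1/p\mp\varepsilon}$ are the classical near-extremizers for those Hardy-type inequalities, so you are in effect reproving their sharpness. This is self-contained, and your error analysis goes through. The truncation errors are $O(x^{-1})$ on $(1,\infty)$, resp.\ $O(1)$ on $(0,1)$, uniformly in small $\varepsilon$. They are therefore bounded in $L^p$, while the main terms are of order $\varepsilon^{-1/p}$. The cost is that this is longer than the paper's factorization argument, which treats the cited sharpness as a black box.

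\textbf{A small correction.} The leading coefficient is $c_\varepsilon=nB(1/p'-\varepsilon,n)$, which tends to $nB(n,1/p')=\Gamma(n+1)\Gamma(1/p')/\Gamma(n+1/p')$. This is the sharp constant in \eqref{BoundednessC} for $\mathcal{C}_n$ itself, not a multiple of $B(n+1,1/p')$. Noticing this makes the link between the two proofs explicit: your limiting ratio is precisely the quotient of the sharp constants for $\mathcal{C}_{n+k}$ and $\mathcal{C}_n$, which is exactly what the paper's factorization exploits.
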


For all $n, k \in \mathbb{N}$, the following theorem proves the continuous embedding 
\[
\Lambda_{n+k}^p(\mathbb{R}^+) \hookrightarrow \Lambda_{n}^p (\mathbb{R}^+)
\]
when $p=2$ or $p=\infty$, and explicitly determines the norms of the corresponding inclusion operators.
\begin{theorem}
\label{Theorem3.2.}
Let $n, k \in \mathbb{N}$ and let $f \in \mathcal{M}^+(\mathbb{R}^+)$. Then
\begin{equation}
\label{IneqCnInfty}
\|\mathcal{C}_n f\|_\infty \leq  \frac{(n-1)^{n-1}}{n^{n-1}} \frac{(n+k)^{n+k-1}}{(n+k-1)^{n+k-1}} \|\mathcal{C}_{n+k} f\|_{\infty}
\end{equation}
and
\begin{equation}
\label{C_nC_mInequality}
\|\mathcal{C}_n f \|_2^2 \leq \bigg( \frac{n}{n+k} \bigg)^2 \frac{B\big( 2(n-1)+1,1\big)}{B\big(2(n+k-1)+1,1\big)}  \|\mathcal{C}_{n+k} f \|_2^2 .
\end{equation}
Moreover, the constants in \eqref{IneqCnInfty} and in \eqref{C_nC_mInequality} are the best possible.
\end{theorem}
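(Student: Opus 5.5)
The plan has two independent parts, one for each inequality, and both start from the same structural fact. Since $\mathcal{C}_m f(x)=\Gamma(m+1)x^{-m}J_0^m f(x)$ and $J_0^{n}f=D^k J_0^{n+k}f$, put $h=\mathcal{C}_{n+k}f$. Then $\mathcal{C}_n f$ is an explicit differential expression in $h$. For $k=1$ it is
\[
\mathcal{C}_nf=h+\tfrac{x}{n+1}h',
\]
and the general case follows by iterating this identity. Positivity of $f$ enters through the fact that every $J_0^jf$ with $j\ge 0$ is non-negative and non-decreasing.

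\emph{Sharpness for \eqref{IneqCnInfty}.} I would test on approximate point masses $f_\varepsilon=\varepsilon^{-1}\chi_{(1,1+\varepsilon)}$. For $f=\delta_1$ we have $\mathcal{C}_m\delta_1(x)=m(x-1)^{m-1}x^{-m}$ for $x>1$. This is maximised at $x=m/(m-1)$, where it equals $(m-1)^{m-1}/m^{m-1}$. The ratio of these suprema for $m=n$ and $m=n+k$ is exactly the constant in \eqref{IneqCnInfty}, so the constant cannot be improved.

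\emph{Upper bound for \eqref{IneqCnInfty}.} The claim is that point masses are extremal. The difficulty is that a supremum of a superposition is not the superposition of suprema, so I would instead prove a pointwise domination. For fixed $x$, I want a probability measure $\mu_x$ on $(x,\infty)$ such that for all $t\in(0,x)$
\[
n\,\frac{(x-t)^{n-1}}{x^{n}}\le A\int (n+k)\,\frac{(y-t)_+^{n+k-1}}{y^{n+k}}\,d\mu_x(y),
\]
with $A$ the stated constant. Integrating this against $f\ge0$ gives $\mathcal{C}_nf(x)\le A\sup_y\mathcal{C}_{n+k}f(y)$.

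By dilation invariance I may take $x=1$. The natural first try is a single point mass $\mu=\delta_{y}$, with $y$ chosen to make the ratio of kernels worst at $t=0$ and at the interior critical point simultaneously. This crude choice already gives a finite constant. For example, with $k=1$ and $y=(n+1)/n$ one obtains $(n+1)^n/n^{n-1}$, which shows the embedding but is not sharp.

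To reach the sharp value I expect to need a genuinely spread-out $\mu$, or an equivalent Taylor/convexity argument on $G=J_0^{n+k}f$: its derivatives up to order $n+k-1$ are monotone and $G(y)\le My^{n+k}/\Gamma(n+k+1)$. The idea is to bound $G^{(k)}(x)$ by testing $G$ at well-chosen points $y>x$. Finding the optimal $\mu_x$, equivalently the right linear combination of values of $G$, is the main obstacle. I would first settle $k=1$ and then iterate, checking that the constants multiply telescopically. The product form $\frac{(n-1)^{n-1}}{n^{n-1}}\cdot\frac{(n+k)^{n+k-1}}{(n+k-1)^{n+k-1}}$ suggests exactly this.

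\emph{The case $p=2$.} Here $\mathcal{C}_n f=h+\tfrac{x}{n+1}h'$ and its iterates allow a direct computation. Expanding $\|\mathcal{C}_nf\|_2^2$ and integrating by parts, using $\int_0^\infty x\,h\,h'\,dx=-\tfrac12\|h\|_2^2$ (after justifying the boundary terms for nice $f$ and then approximating), gives a quadratic form in $h$ and $xh'$. The term $\|xh'\|_2$ cannot be bounded for general $h$, so positivity must enter again. I would rewrite everything in terms of $J_0^{n-1}f$, which is non-negative and non-decreasing, and then apply a weighted Hardy-type inequality that is sharp on monotone functions.

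The constant $(n/(n+k))^2\,B(2n-1,1)/B(2n+2k-1,1)=(n/(n+k))^2(2n+2k-1)/(2n-1)$ strongly suggests that the extremals are again point masses. For $f=\delta_1$ one has $\|\mathcal{C}_m\delta_1\|_2^2=m^2B(2m-1,1)$, so the ratio of squared norms equals the stated constant exactly; this is the sharpness argument. For the upper bound, I would decompose $\|\mathcal{C}_nf\|_2^2=\iint f(s)f(t)K_n(s,t)\,ds\,dt$ with the explicit kernel
\[
K_n(s,t)=\int_{\max(s,t)}^\infty\mathcal{C}_n\delta_s\,\mathcal{C}_n\delta_t\,dx,
\]
homogeneous of degree $-1$. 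Since $f\ge0$, it suffices to prove the pointwise kernel inequality $K_n\le A\,K_{n+k}$ with equality on the diagonal $s=t$. This reduces to a one-variable inequality in $s/t$, which I would verify by writing both kernels as Beta-type polynomials in $s/t$ and comparing coefficients.
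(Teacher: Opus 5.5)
\textbf{Gap in the upper bound of \eqref{IneqCnInfty}.} You discard the single point mass $\mu_x=\delta_{(n+1)x/n}$ because of an arithmetic error. The replacement you propose, a spread-out $\mu_x$ or a Taylor argument on $J_0^{n+k}f$, is left as an unresolved ``main obstacle''. So as written, the first inequality is not proved.

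In fact your crude choice is already sharp. With $x=1$ and $y=(n+1)/n$ the kernel ratio is
\[
R(t)=\frac{n}{n+1}\,y^{n+1}\,\frac{(1-t)^{n-1}}{(y-t)^{n}},\qquad 0<t<1,
\]
and $\frac{d}{dt}\log R(t)$ has the sign of $n-(n-1)y-t=\frac1n-t$. Hence $\sup_{0<t<1}R=R(1/n)$. Since $y-\frac1n=1$,
\[
R(1/n)=\Big(\frac{n+1}{n}\Big)^{n}\Big(\frac{n-1}{n}\Big)^{n-1}=\frac{(n-1)^{n-1}}{n^{n-1}}\cdot\frac{(n+1)^{n}}{n^{n}},
\]
which is exactly the constant in \eqref{IneqCnInfty} for $k=1$. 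The value at $t=0$ is $1$, not the maximum; for $n=2$ this gives $9/8$, not $9/2$.

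Rescale this kernel inequality to general $x$, multiply by $f\ge0$, integrate over $(0,x)$, and enlarge the range to $(0,\frac{n+1}{n}x)$. This gives $\mathcal{C}_nf(x)\le R(1/n)\,\mathcal{C}_{n+1}f(\frac{n+1}{n}x)$. The telescoping iteration you anticipated (Lemma~\ref{ElementaryLemma} with $C(n)=(n-1)^{n-1}/n^{n-1}$) then finishes the proof. This is precisely the paper's argument; no optimisation over measures is needed.

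Two small points in your sharpness part. First, $m(x-1)^{m-1}x^{-m}$ peaks at $x=m$, not at $m/(m-1)$, although the value you state is correct. Second, passing from $\varepsilon^{-1}\chi_{(1,1+\varepsilon)}$ to $\delta_1$ is justified by the pointwise bound $\mathcal{C}_m(\varepsilon^{-1}\chi_{(1,1+\varepsilon)})\le\mathcal{C}_m\delta_1$ together with pointwise convergence.

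\textbf{The case $p=2$.} Your route differs from the paper's. The paper uses $\|\mathcal{C}_mf\|_2=\|\mathcal{C}_m^*f\|_2$ and imports the adjoint estimate of Theorem~\ref{Theorem3.6.}, which is proved by integration by parts, discarding a non-negative term, splitting into mixed kernels, and Lemma~\ref{LemaKernel*}.

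Your pointwise comparison $K_n\le AK_{n+k}$ is true and more direct, but ``comparing coefficients'' will not prove it. With $r=\min(s,t)/\max(s,t)$, your kernel satisfies $K_m(1,r)=m^2\int_0^1 w^{m-1}\,dv$ with $w(v)=v(1-r+rv)$. This is a polynomial in $r$ with alternating signs. Already for $n=k=1$ the required inequality $1\le\frac34\bigl(2-\frac{2r}{3}\bigr)$ fails coefficientwise and holds only because $r\le1$.

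You need an argument that uses $r\in(0,1]$. One option is the paper's ratio-monotonicity on $[-1,0)$. Another is the following. The image of $dv$ under $w$ has density $\bigl(4y/((1-r)^2+4ry)\bigr)^{1/2}$ relative to its image under $v\mapsto v^2$, and this is non-decreasing in $y$. That monotone likelihood ratio survives weighting by $y^{m-1}$, so
\[
\frac{\int_0^1 w^{m}\,dv}{\int_0^1 w^{m-1}\,dv}\ge\frac{\int_0^1 v^{2m}\,dv}{\int_0^1 v^{2m-2}\,dv}=\frac{2m-1}{2m+1}.
\]
This is exactly your $k=1$ kernel inequality, with equality at $r=1$; iterating it gives general $k$.
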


For all $n, k \in \mathbb{N}$, the following result proves the continuous embedding 
\[
\Lambda_{n+k}^{*p}(\mathbb{R}^+) \hookrightarrow \Lambda_{n}^{*p} (\mathbb{R}^+)
\]
when $p=1$ or $p=2$, and explicitly provides the norms of the corresponding inclusion operators.
\begin{theorem}
\label{Theorem3.6.}
Let $n,k \in \mathbb{N}$ and let $f \in \mathcal{M}^+(\mathbb{R}^+)$. If $p=1$ or $p=2$, then
\begin{equation}
\label{C*_nC_^*Inequality}
\|\mathcal{C}^*_n f \|_p^p \leq \frac{n^p}{(n+k)^p} \frac{p(n+k-1)+1}{p(n-1)+1}  \|\mathcal{C}^*_{n+k} f \|_p^p.
\end{equation}
Moreover, the constant in \eqref{C*_nC_^*Inequality} is optimal.
\end{theorem}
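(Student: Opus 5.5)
The plan is to reduce both cases to explicit integral identities coming from Fubini, and to compare kernels pointwise. Throughout I write
\[
\mathcal{C}^*_m f(x)=m\int_x^\infty f(t)\,t^{-m}(t-x)^{m-1}\,dt .
\]

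\textbf{Case $p=1$.} Since $f\ge 0$, Tonelli gives
\[
\|\mathcal{C}^*_m f\|_1=m\int_0^\infty f(t)\,t^{-m}\int_0^t (t-x)^{m-1}\,dx\,dt=\int_0^\infty f(t)\,dt
\]
for every $m\in\mathbb{N}$. For $p=1$ the constant in \eqref{C*_nC_^*Inequality} is $\frac{n}{n+k}\cdot\frac{n+k}{n}=1$. So \eqref{C*_nC_^*Inequality} is in fact an equality for every $f$, and sharpness is automatic.

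\textbf{Case $p=2$: reduction to kernels.} Expanding the square and using Tonelli,
\[
\|\mathcal{C}^*_m f\|_2^2=\int_0^\infty\!\!\int_0^\infty f(s)f(t)\,m^2K_m(s,t)\,ds\,dt,
\qquad
K_m(s,t)=(st)^{-m}\int_0^{\min(s,t)}(s-x)^{m-1}(t-x)^{m-1}\,dx .
\]
For $s\le t$, the substitution $x=sy$ with $u=s/t\in(0,1]$ gives
\[
K_m(s,t)=\frac1t\,A_m(u),\qquad A_m(u)=\int_0^1(1-y)^{m-1}(1-uy)^{m-1}\,dy .
\]
Since $f\ge0$, it suffices to prove the pointwise bound
\[
n^2A_n(u)\le c\,(n+k)^2A_{n+k}(u)\qquad (0<u\le 1),
\]
with $c$ equal to the ratio at $u=1$. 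Because $A_m(1)=1/(2m-1)$, that ratio is
\[
\frac{n^2}{(n+k)^2}\cdot\frac{2(n+k)-1}{2n-1},
\]
which is exactly the constant in \eqref{C*_nC_^*Inequality} for $p=2$. So the statement reduces to showing that $R(u)=A_n(u)/A_{n+k}(u)$ attains its maximum at $u=1$.

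\textbf{Main obstacle: monotonicity of $R$.} I will show that $R$ is nondecreasing on $(0,1]$. First reduce to $k=1$: the constants telescope, since
\[
\prod_{j=n}^{n+k-1}\frac{j^2}{(j+1)^2}\cdot\frac{2j+1}{2j-1}=\frac{n^2}{(n+k)^2}\cdot\frac{2(n+k)-1}{2n-1}.
\]
For $k=1$, write $A_{n+1}/A_n=E_u\big[(1-y)(1-uy)\big]$, where $E_u$ is the expectation with respect to the probability density proportional to $(1-y)^{n-1}(1-uy)^{n-1}$ on $(0,1)$. I need this expectation to be nonincreasing in $u$. Differentiating in $u$ gives
\[
\frac{d}{du}E_u\big[(1-y)(1-uy)\big]=-E_u\big[y(1-y)\big]-(n-1)\,\mathrm{Cov}_u\!\Big((1-y)(1-uy),\,\frac{y}{1-uy}\Big).
\]
Both factors in the covariance are monotone in $y$ in opposite directions: the first decreases and the second increases. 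So by Chebyshev's association inequality the covariance is $\le 0$. The sign of the total derivative therefore needs a genuine comparison between the two terms, and this is the step I expect to be hardest.

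To settle it, I would first try expanding $A_n$ and $A_{n+1}$ as polynomials in $u$ and checking directly that $A_n'A_{n+1}-A_nA_{n+1}'\ge0$ by comparing coefficients. Failing that, I would try a total-positivity argument in the variables $(y,u)$.

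\textbf{Sharpness for $p=2$.} Take $f=\chi_{(1,1+\varepsilon)}$. The double integrals then only involve $s/t\in\big((1+\varepsilon)^{-1},1\big]$. Since $A_m$ is continuous at $u=1$, the ratio of the two sides of \eqref{C*_nC_^*Inequality} tends to the constant at $u=1$ as $\varepsilon\to0$, which proves optimality.
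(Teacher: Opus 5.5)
\textbf{Gap: the monotonicity of $R$ is asserted but never proved.} The rest of your reduction is sound. For $f\ge 0$, the case $p=2$ does follow from the pointwise bound $A_n(u)/A_{n+k}(u)\le A_n(1)/A_{n+k}(1)$ on $(0,1]$. The passage to $k=1$ is fine, since a product of positive nondecreasing ratios is nondecreasing. The case $p=1$ is exactly the paper's. Your sharpness argument squeezes the ratio of the two quadratic forms between the infimum and supremum of $R$ over $u\in\bigl((1+\varepsilon)^{-1},1\bigr]$; it is correct and simpler than going through \eqref{Limit3}.

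All the content, however, sits in the step you leave open. You announce that $R=A_n/A_{n+1}$ is nondecreasing and do not prove it.
\begin{itemize}
\item Your derivative formula splits into $-E_u[y(1-y)]\le 0$ plus a covariance term that Chebyshev's inequality only shows is $\ge 0$. So it decides nothing.
\item Your fallback of comparing coefficients in powers of $u$ is not straightforward either. The expansion $A_m(u)=\sum_{j=0}^{m-1}\frac{((m-1)!)^2}{(m-1-j)!\,(m+j)!}(-u)^j$ has alternating coefficients. Already for $n=2$ one gets $A_2'A_3-A_2A_3'=(u-1)(u-5)/180$ (derivatives in $u$), whose coefficients in $u$ have mixed signs.
\end{itemize}

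\textbf{How to close it.} Pass to $x=1-u\in[0,1)$ and write $1-uy=(1-y)+xy$. This gives
\[
A_m=\sum_{j=0}^{m-1}\binom{m-1}{j}\frac{j!\,(2m-2-j)!}{(2m-1)!}\,x^j .
\]
Call the positive coefficients $a_j$ for $m=n$ and $b_j$ for $m=n+1$; note $a_n=0<b_n$. A direct computation gives, for $0\le j\le n-1$ and $l=n-j$,
\[
\frac{b_j}{a_j}=\frac{(2n-j)(2n-1-j)}{2(2n+1)(n-j)}=\frac{1}{2(2n+1)}\Bigl(l+2n-1+\frac{n(n-1)}{l}\Bigr).
\]
This is nondecreasing in $j$, because $l\mapsto l+n(n-1)/l$ is nonincreasing on $\{1,\dots,n\}$. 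Hence, with derivatives in $x$,
\[
A_{n+1}'A_n-A_{n+1}A_n'=\tfrac12\sum_{i,j}(i-j)(b_ia_j-b_ja_i)x^{i+j-1}\ge 0
\]
for $x>0$. So $A_{n+1}/A_n$ is nondecreasing in $x$, i.e.\ $R$ is nondecreasing in $u$, as you need.

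\textbf{Comparison with the paper.} Completed this way, your route differs genuinely from the paper's. The paper never compares the quadratic-form kernels of $\mathcal{C}^*_m$ and $\mathcal{C}^*_{m+1}$ directly. Instead it integrates by parts and discards the nonnegative part of an inner integral (the range $u\in(x,t)$). It then rewrites what remains as a sum of mixed kernels, symmetrizes, and applies Lemma~\ref{LemaKernel*}, which rests on the monotonicity of $\Theta_{m,k}$ in Lemma~\ref{LemmaPhi}. Your pointwise comparison is shorter and applies to every $f\in\mathcal{M}^+(\mathbb{R}^+)$ without approximation. It also places the monotone-ratio argument where the coefficients and the variable are both positive, so the standard criterion applies directly.
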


The following theorem demonstrates the continuous embedding 
\[
\Lambda_{2}^p(\mathbb{R}^+) \hookrightarrow \Lambda_{1}^p (\mathbb{R}^+)
\]
for all $1<p < \infty$ and explicitly determines the norm of the corresponding inclusion operator.
\begin{theorem}
\label{Theorem3.3.}
Let $1 < p < \infty$ and let $f \in \mathcal{M}^+(\mathbb{R}^+)$. Then
\begin{equation}
\label{IneqCnCn+1}
\|\mathcal{C}_1 f \|_p^p \leq  \frac{1}{2^p} \frac{B(1,p-1)}{B(p+1,p-1)} \|\mathcal{C}_{2} f \|_p^p.
\end{equation}
Moreover, the constant in \eqref{IneqCnCn+1} is sharp. 
\end{theorem}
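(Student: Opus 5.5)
The plan is to rewrite both sides in terms of a single convex function, and then prove a reverse Hardy-type inequality on the cone of such functions. For $f\in\mathcal{M}^+(\mathbb{R}^+)$ put $F(s)=\int_0^s f$ and $H(x)=\int_0^x F(s)\,ds$. Fubini gives $\int_0^x f(t)(x-t)\,dt=H(x)$, so
\[
\mathcal{C}_1 f(x)=\frac{H'(x)}{x},\qquad \mathcal{C}_2 f(x)=\frac{2H(x)}{x^2}.
\]
Here $H$ is convex with $H(0)=H'(0^+)=0$, because $F=H'$ is non-decreasing. The factor $2^{-p}$ in \eqref{IneqCnCn+1} cancels the $2^p$, so the statement is equivalent to
\[
\int_0^\infty \Big(\frac{F(x)}{x}\Big)^p dx\le \frac{B(1,p-1)}{B(p+1,p-1)}\int_0^\infty\Big(\frac{1}{x^2}\int_0^x F\Big)^p dx
\]
for all non-decreasing $F\ge 0$ with $F(0^+)=0$.

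\textbf{Sharpness.} I would first identify the extremal, which is a point mass $f=\delta_a$ approximated by $f_\varepsilon=\varepsilon^{-1}\chi_{(a,a+\varepsilon)}$. Then $F=\chi_{(a,\infty)}$ and $H=(x-a)_+$, and the two integrals are computed directly:
\[
\int_a^\infty x^{-p}\,dx=\frac{a^{1-p}}{p-1}=a^{1-p}B(1,p-1).
\]
For the other one, the substitution $x=a/u$ gives
\[
\int_a^\infty (x-a)^p x^{-2p}\,dx=a^{1-p}\int_0^1(1-u)^p u^{p-2}\,du=a^{1-p}B(p+1,p-1).
\]
The ratio of the two is exactly the constant in \eqref{IneqCnCn+1}. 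Letting $\varepsilon\to0$ then shows the constant cannot be improved, once the inequality itself is established.

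\textbf{The inequality.} The natural idea is that every admissible $F$ is a superposition of jumps, $F(s)=\int_{(0,s)}d\mu(a)$ with $\mu\ge0$, so $F=\int\chi_{(a,\infty)}\,d\mu(a)$ and $H=\int (x-a)_+\,d\mu(a)$. The aim is to show that the single-jump case is the worst one. By monotone convergence it suffices to treat finite sums $F=\sum_{i=1}^N c_i\chi_{(a_i,\infty)}$ with $a_1<\dots<a_N$. I would argue by induction on $N$, comparing the left side with the right side interval by interval.

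On $(a_j,a_{j+1})$ we have $F\equiv S_j:=c_1+\dots+c_j$. There I would use the convexity bound
\[
H(x)\ge S_j\,(x-b_j),\qquad b_j:=a_j-\frac{H(a_j)}{S_j}\le a_j,
\]
which says that $H$ lies above a shifted single-jump profile with the same slope. On each interval this reduces the comparison to an estimate for the single-jump pair $\big(S_j\chi_{(b_j,\infty)},\,S_j(x-b_j)_+\big)$, restricted to $(a_j,a_{j+1})$.

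\textbf{Main obstacle.} The single-jump estimate holds on the whole half-line $(b,\infty)$, not on each sub-interval, and the inequality is reverse-Hardy in nature: both sides are $p$-homogeneous, so Minkowski's inequality runs the wrong way. The step I expect to be hardest is gluing the interval-wise estimates together. What I would try first is to differentiate in $x$ and show that
\[
\Phi(x)=c\int_0^x\Big(\frac{H}{t^2}\Big)^p dt-\int_0^x\Big(\frac{H'}{t}\Big)^p dt,\qquad c=\frac{B(1,p-1)}{B(p+1,p-1)},
\]
has non-negative limit at $\infty$. To do this I would exhibit a supplementary boundary term $\Psi(x)=\psi\big(x,H(x),H'(x)\big)$, homogeneous of the appropriate degree, such that:
\begin{itemize}
\item $(\Phi+\Psi)'\ge0$ wherever $H'$ is locally constant;
\item $\Psi$ only increases at jumps of $H'$;
\item $\Psi\to0$ at $0$ and at $\infty$.
\end{itemize}
The function $\psi$ would be obtained from the exact single-jump computation by the substitution $u=b/x$, i.e.\ the incomplete Beta integral $\int_0^{b/x}(1-u)^p u^{p-2}\,du$. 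If this Lyapunov-type construction fails, the fallback is to show that the ratio is quasi-concave in each $c_i$, so that its extrema over the simplex occur at the vertices, which are the single jumps.
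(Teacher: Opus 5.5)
Your reformulation through $F$ and $H$ and the sharpness part are correct. The approximants $\varepsilon^{-1}\chi_{(a,a+\varepsilon)}$ are, up to scaling, the test functions the paper uses in \eqref{Limit1}, and both Beta integrals check out. The gap is the inequality itself, which is the substance of the theorem; for it the proposal gives a plan but no argument. The interval-by-interval induction cannot work as stated. On $(a_1,a_2)$ you have $b_1=a_1$, and as $a_2\downarrow a_1$ the left-hand contribution is of order $a_2-a_1$ while the right-hand one is of order $(a_2-a_1)^{p+1}$. Moreover, on each $(a_j,a_{j+1})$ your ``convexity bound'' is an identity, so it gains nothing. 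Everything therefore rests on the gluing step, and there you only list properties a boundary term $\Psi$ ought to have, without defining it or verifying any of them. The quasi-concavity fallback is likewise unsupported, since a ratio of two convex $p$-homogeneous functionals of $(c_1,\dots,c_N)$ need not be quasi-concave.

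That said, your first idea does close, and the missing verification is short. With $c=B(1,p-1)/B(p+1,p-1)$, put, for $x>0$ and $h,S\ge0$,
\[
R(x,h,S)=c\int_x^\infty \bigl(h+S(t-x)\bigr)^p\,\frac{dt}{t^{2p}}-\frac{S^p x^{1-p}}{p-1},
\]
and $\Psi(x)=R\bigl(x,H(x),F(x)\bigr)$ for $f\in C_c^+(\mathbb{R}^+)$. With $b=x-h/S$, the first term of $R$ is $cS^pb^{1-p}\int_0^{b/x}(1-u)^pu^{p-2}\,du$, which is your incomplete Beta integral. Differentiating under the integral gives $\partial_xR+S\,\partial_hR=S^px^{-p}-ch^px^{-2p}$. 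Hence $(\Phi+\Psi)'=f\,\partial_SR\bigl(x,H,F\bigr)$, and since $h\ge0$ and $p>1$,
\[
\partial_SR(x,h,S)\ge pS^{p-1}\Bigl(c\int_x^\infty (t-x)^p\,\frac{dt}{t^{2p}}-\frac{x^{1-p}}{p-1}\Bigr)=pS^{p-1}x^{1-p}\bigl(cB(p+1,p-1)-B(1,p-1)\bigr)=0
\]
by your single-jump computation with $a=x$. To the left of the support of $f$ you have $\Phi=\Psi=0$. Beyond the support $F$ is constant, so $\Psi$ is a difference of tails of convergent integrals and $\Psi(x)\to0$ as $x\to\infty$. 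You therefore get $\Phi(\infty)\ge0$, i.e.\ \eqref{IneqCnCn+1} for $f\in C_c^+$, and monotone convergence does the rest. For $p=2$ the deficit is exactly $\int_0^\infty f(x)H(x)x^{-2}\,dx$.

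Completed this way, your route is a genuine alternative to the paper's. The paper integrates by parts once and bounds $H^{p-1}$ from below, by Jensen's inequality for $1<p<2$ and by a fundamental-theorem-of-calculus estimate for $p\ge2$. It then concludes with Tonelli and the monotonicity of $F$. The calibration argument handles all $1<p<\infty$ at once and produces the deficit explicitly. This also shows why only point masses are (asymptotically) extremal.
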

The following theorem proves the continuous embedding 
\[
\Lambda_{2}^{*p}(\mathbb{R}^+) \hookrightarrow \Lambda_{1}^{*p} (\mathbb{R}^+)
\]
for all $1<p < \infty$ and explicitly determines the norm of the corresponding inclusion operator.
\begin{theorem}
\label{Theorem3.4.}
Let $1 < p < \infty$ and let $f \in \mathcal{M}^+(\mathbb{R}^+)$. Then
\begin{equation}
\label{IneqCn*Cn+1*}
\|\mathcal{C}_1^* f \|_p^p \leq  \frac{p+1}{2^p} \|\mathcal{C}_{2}^* f \|_p^p.
\end{equation}
Moreover, the constant in \eqref{IneqCn*Cn+1*} is the best possible. 
\end{theorem}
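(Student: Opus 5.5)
The plan is to reduce the inequality to a statement about one decreasing function and then prove a sharp reverse Hardy-type inequality for decreasing functions; sharpness should come from indicator functions. Put $g=\mathcal{C}_1^*f$, so $g(x)=\int_x^\infty f(t)\,t^{-1}\,dt$. This $g$ is non-increasing and $-g'(t)=f(t)/t$. Expanding $(t-x)/t^2 = 1/t - x/t^2$ gives
\[
\mathcal{C}_2^*f(x)=2g(x)-2x\int_x^\infty \frac{f(t)}{t^2}\,dt .
\]
Integrating by parts, $\int_x^\infty f(t)t^{-2}\,dt=\int_x^\infty(-g'(t))t^{-1}\,dt=g(x)/x-\int_x^\infty g(t)t^{-2}\,dt$. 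This step needs a truncation argument, first taking $f$ bounded with compact support in $(0,\infty)$ and then using monotone convergence, so that the boundary terms vanish. The two $2g$ terms cancel, and we get
\[
\mathcal{C}_2^*f(x)=2\,Hg(x),\qquad Hg(x):=x\int_x^\infty \frac{g(t)}{t^2}\,dt .
\]
So \eqref{IneqCn*Cn+1*} is equivalent to the reverse inequality
\[
\int_0^\infty g^p\le (p+1)\int_0^\infty (Hg)^p
\]
for every non-negative non-increasing $g$.

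Sharpness is immediate from this form. For $g=\chi_{(0,s)}$ we get $Hg(x)=(1-x/s)_+$, so $\int_0^\infty(Hg)^p=s/(p+1)$ and $\int_0^\infty g^p=s$, which is equality. Such a $g$ is $\mathcal{C}_1^*f$ for $f$ a point mass at $t=s$ with weight $s$. I will therefore take $f_\varepsilon=\varepsilon^{-1}t\,\chi_{(s,s+\varepsilon)}(t)$, compute both sides, and let $\varepsilon\to0$; the ratio tends to $(p+1)/2^p$.

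For the inequality itself, I will set $u=Hg$. Then $u(x)/x$ is non-increasing, and a direct computation gives $u'=(u-g)/x$, that is, $g=u-xu'$. The pointwise bound I will use is the following. For $y<x$, since $g$ is decreasing,
\[
u(y)\ge y\,g(x)\int_y^x t^{-2}\,dt=g(x)\Bigl(1-\frac yx\Bigr),
\]
with equality exactly when $g$ is an indicator. Integrating in $y$ gives $\int_0^x u^p\ge x\,g(x)^p/(p+1)$, which is already sharp at each level. To turn this into the global bound, I will write $g$ as a superposition of indicators, $g=\int\chi_{(0,s)}\,d\mu(s)$ via the layer-cake representation with $d\mu=-dg$. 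Then I will write $\int g^p=\int_0^\infty x\,d(-g^p)(x)$ by integration by parts. After that I will try to dominate $\int_0^\infty x\,d(-g^p)(x)$ by $(p+1)\int u^p$ using the pointwise bound. For this I will refine the estimate to $u(y)\ge\int_{(y,\infty)}(1-y/s)\,d\mu(s)$, and compare $\bigl(\int(1-y/s)_+d\mu(s)\bigr)^p$ with $\bigl(\int\chi_{(y,\infty)}(s)\,d\mu(s)\bigr)^p$ through the derivative identity $g=u-xu'$.

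The main obstacle is exactly this last step. The map $g\mapsto Hg$ is linear, but $\|\cdot\|_p^p$ is not additive over the layers when $p\neq1$, so the extremality of indicators does not pass directly to superpositions. My first attempt will be a convexity argument in the spirit of the level-function or monotone-function techniques presumably collected in Section~\ref{Preliminaries}. The idea is that the ratio $\int(Hg)^p/\int g^p$ over the cone of decreasing functions attains its infimum at extreme rays, which are the indicators. If that does not work, I will fall back on a direct computation. In that computation I will integrate $\frac{d}{dx}\bigl(x\,u^p\bigr)$, substitute $xu'=u-g$, and apply Young's inequality $u^{p-1}g\le\frac{p-1}{p}u^p+\frac1p g^p$ with weights adjusted to the equality case $u=(1-x/s)_+$, $g=\chi_{(0,s)}$.
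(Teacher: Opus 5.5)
Your reduction and your sharpness argument are correct. The cancellation giving $\mathcal{C}_2^*f=2Hg$ with $g=\mathcal{C}_1^*f$ is identity \eqref{Identity2} for $m=1$. The computation $H\chi_{(0,s)}=(1-x/s)_+$ gives equality in $\int g^p\le(p+1)\int(Hg)^p$, and your $f_\varepsilon$ transfers this to the original ratio, essentially as the paper does via \eqref{Limit3}. Your pointwise bound $u(y)\ge g(x)(1-y/x)$ for $y<x$ is also right.

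\textbf{The gap.} The inequality $\int_0^\infty g^p\le(p+1)\int_0^\infty (Hg)^p$ for non-increasing $g$ is the whole content of the theorem, and it is never proved. Neither of your two routes can prove it. The claim that the infimum of $\int(Hg)^p/\int g^p$ over the cone of decreasing functions is attained on extreme rays is not a valid general principle. A quotient of two convex functionals on a convex cone need not be minimized on extreme rays: on the positive quadrant, $(a^2+b^2)/(a+b)^2$ equals $1$ on both boundary rays and $1/2$ at $(1,1)$. Section~\ref{Preliminaries} contains no level-function machinery that would rescue this.

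Your fallback fails for a more basic reason. Integrating $\frac{d}{dx}(xu^p)=(p+1)u^p-pu^{p-1}g$ gives $\int u^{p-1}g=\frac{p+1}{p}\int u^p$, which is exactly the paper's first step. So the theorem is equivalent to the \emph{lower} bound $\int_0^\infty u^{p-1}g\ge\frac1p\int_0^\infty g^p$. Young's inequality bounds $u^{p-1}g$ from \emph{above}. With any weights, even $x$-dependent ones, it only produces estimates with $\int g^p$ on the large side, i.e.\ Hardy-type bounds such as $\int u^p\le\frac12\int g^p$. Moreover, that step never uses that $g$ is non-increasing, while the reverse inequality is false without monotonicity: for $g=\chi_{(a,a+\delta)}$ one gets $\int(Hg)^p/\int g^p=O(\delta^{p-1})\to0$. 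Note also that pointwise $u\le g$, so comparing $\bigl(\int(1-y/s)_+\,d\mu(s)\bigr)^p$ with $g(y)^p$ directly can only go the wrong way.

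\textbf{The missing idea} is a lower bound for $u(x)^{p-1}$ that exploits monotonicity, applied after the identity has lowered the exponent to $p-1$. Write $d\mu(s)=f(s)s^{-1}\,ds$, so $g(x)=\mu((x,\infty))$ and $u(x)=\int_{(x,\infty)}(1-x/s)\,d\mu(s)$.

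For $1<p<2$, the paper applies Jensen's inequality to the concave map $t\mapsto t^{p-1}$ and the probability measure $d\mu/g(x)$ on $(x,\infty)$. This gives $u(x)^{p-1}\ge g(x)^{p-2}\int_{(x,\infty)}(1-x/s)^{p-1}\,d\mu(s)$.

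For $p\ge2$ Jensen points the wrong way. Instead, the paper writes $u(x)^{p-1}$ via the fundamental theorem of calculus in the lower limit $v$ of $\int_{(v,\infty)}(1-x/s)\,d\mu(s)$. It then uses $\int_{(v,\infty)}(1-x/s)\,d\mu(s)\ge(1-x/v)\,g(v)$, raised to the power $p-2\ge0$.

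In both cases, Tonelli's theorem, one integration by parts, and $g(x)\ge g(t)$ for $x<t$ give $\int u^{p-1}g\ge(p-1)B(p-1,2)\int g^p=\frac1p\int g^p$. This yields the theorem.
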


\section{Auxiliary results}
\label{Preliminaries} 

We begin by introducing an elementary lemma which will be used throughout the proofs of the main results in this paper.

\begin{lemma}
\label{ElementaryLemma}
Let $\{T_n\}_{n \ge 1}$ be a sequence of bounded operators on $L^p(\mathbb{R}^+)$, and let $\{C(n)\}_{n \ge 1}$ be a sequence of positive real numbers. If
\begin{equation*}
    \|T_n f\|_p \le \frac{C(n)}{C(n+1)} \|T_{n+1} f\|_p \quad \left(\text{resp. } \ge \right)
\end{equation*}
holds for each $n \in \mathbb{N}$ and all $f \in \mathcal{M}^+(\mathbb{R}^+)$, then
\begin{equation*}
    \|T_n f\|_p \le \frac{C(n)}{C(m)} \|T_m f\|_p \quad \left(\text{resp. } \ge \right)
\end{equation*}
holds for all $f \in \mathcal{M}^+(\mathbb{R}^+)$ and for all $m, n \in \mathbb{N}$ with $m \ge n$.
\end{lemma}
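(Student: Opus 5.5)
The argument is a telescoping induction on $m-n$. Fix $n\in\mathbb{N}$ and $f\in\mathcal{M}^+(\mathbb{R}^+)$; we treat the inequality ``$\le$'', and the reverse case is identical with all inequalities flipped. We show by induction on $j\ge 0$ that
\[
\|T_n f\|_p \le \frac{C(n)}{C(n+j)}\,\|T_{n+j} f\|_p .
\]

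For $j=0$ the constant is $C(n)/C(n)=1$, so the claim is trivial. For $j=1$ it is exactly the hypothesis.

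For the inductive step, assume the bound holds for some $j\ge 1$. Apply the hypothesis with index $n+j$ to the same function $f$; this is legitimate because the hypothesis is assumed for every index and every non-negative $f$. It gives
\[
\|T_{n+j} f\|_p \le \frac{C(n+j)}{C(n+j+1)}\,\|T_{n+j+1} f\|_p .
\]
Since $C(n)/C(n+j)>0$, multiplying this by $C(n)/C(n+j)$ preserves the inequality. Chaining the result with the inductive hypothesis gives
\[
\|T_n f\|_p \le \frac{C(n)}{C(n+j)}\cdot\frac{C(n+j)}{C(n+j+1)}\,\|T_{n+j+1} f\|_p=\frac{C(n)}{C(n+j+1)}\,\|T_{n+j+1} f\|_p .
\]
This completes the induction, and taking $m=n+j$ yields the statement.

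There is no real obstacle. The only points to check are that the constants are positive, so multiplying by them preserves the direction of the inequality, and that $f$ stays fixed throughout, so the hypothesis applies at each step without changing the function. The norms may take the value $+\infty$; in that case the inequalities still hold in the extended sense, since multiplying $+\infty$ by a positive constant gives $+\infty$.
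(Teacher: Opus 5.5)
Your proposal is correct and is the same argument as the paper's, which simply says the result follows by induction on $m \ge n$. Your telescoping induction on $j = m-n$ spells that induction out, and the only thing it needs is that each ratio $C(n)/C(n+j)$ is positive, which you check.
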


\begin{proof}
The result follows immediately by induction on $m \ge n$.
\end{proof}

The following result is a crucial lemma that relates $\mathcal{C}_m$ with $\mathcal{C}_{m+1}$ and $\mathcal{C}_m^*$ with $\mathcal{C}^*_{m+1}$.
\begin{lemma} Let $m \in \mathbb{N} \cup \{0\}$ and let $f \in \mathcal{M}^+(\mathbb{R}^+)$. Then
\begin{equation}
\label{Identity1}
\mathcal{C}_{m+1}f(x) = \frac{m+1}{x^{m+1}}\int_0^x t^m \mathcal{C}_{m} f(t)\,dt
\end{equation}
and
\begin{equation}
\label{Identity2}
\mathcal{C}^*_{m+1}f(x) = (m+1)x^m \int_x^\infty \frac{\mathcal{C}^*_m f(t)}{t^{m+1}}\,dt, 
\end{equation}
for all $x>0$.
\begin{proof}
 We first establish identity \eqref{Identity1}. The base case $m = 0$ is trivial. Now, let $m \ge 1$ be a natural number and let $f \in \mathcal{M}^+(\mathbb{R}^+)$. Applying Tonelli's theorem to exchange the order of integration, we obtain
\begin{align*}
    \frac{1}{x^{m+1}} \int_0^x t^m \mathcal{C}_m f(t) \, dt 
    &= \frac{m}{x^{m+1}} \int_0^x \int_0^t (t-u)^{m-1} f(u) \, du \, dt \\
    &= \frac{m}{x^{m+1}} \int_0^x f(u) \left( \int_u^x (t-u)^{m-1} \, dt \right) du \\
    &= \frac{1}{x^{m+1}} \int_0^x (x-u)^m f(u) \, du \\
    &= \frac{1}{m+1} \mathcal{C}_{m+1} f(x).
\end{align*}

\medskip

We now turn to identity \eqref{Identity2}, where the base case $m = 0$ is again immediate. For any natural number $m \ge 1$ and $f \in \mathcal{M}^+(\mathbb{R}^+)$, a similar argument using Tonelli's theorem and the change of variables $s = u/t$ can be followed to prove
\[
    (m+1)x^m \int_x^\infty \frac{\mathcal{C}^*_m f(t)}{t^{m+1}}\,dt = \mathcal{C}^*_{m+1}f(x).
\]
This concludes the proof
\end{proof}
\end{lemma}

 Throughout what follows, we adopt the standard convention $0^0 = 1$. We now introduce an important family of test functions.
\begin{lemma}
Let $1 < p \le \infty$ and let $m, n \in \mathbb{N}$. If $1<p<\infty$, then
\begin{equation}
\label{Limit1}
\lim_{\varepsilon \to 0^+} \frac{\| \mathcal{C}_m \chi_{(1,1+\varepsilon)}\|_p^p}{\| \mathcal{C}_{n} \chi_{(1,1+\varepsilon)}\|_p^p} = \bigg(\frac{m}{n}\bigg)^p \frac{B\big( (m-1)p + 1, p - 1 \big)}{B\big( (n-1)p + 1, p - 1 \big)}
\end{equation}
and
\begin{equation}
\label{Limit3}
\lim_{\varepsilon \to 0^+} \frac{\| \mathcal{C}^*_m \chi_{(1,1+\varepsilon)}\|_p^p}{\| \mathcal{C}^*_{n} \chi_{(1,1+\varepsilon)}\|_p^p} = \frac{m^p}{p(m-1)+1} \frac{p(n-1)+1}{n^p},
\end{equation}
If $p=\infty$, then
\begin{equation}
\label{Limit2}
\lim_{\varepsilon \to 0^+} \frac{\| \mathcal{C}_m \chi_{(1,1+\varepsilon)}\|_\infty}{\| \mathcal{C}_{n} \chi_{(1,1+\varepsilon)}\|_\infty} = \frac{(m-1)^{m-1}}{m^{m-1}} \frac{n^{n-1}}{(n-1)^{n-1}}.
\end{equation}
\begin{proof}
Let $m \in \mathbb{N}$ and $\varepsilon > 0$. We proceed to compute $\mathcal{C}_m \chi_{(1,1+\varepsilon)}$. For $0 < x \le 1$, we clearly have
\begin{align*}
\mathcal{C}_m \chi_{(1,1+\varepsilon)}(x) = \frac{m}{x^m} \int_0^x 0 \cdot (x-t)^{m-1} \, dt = 0.
\end{align*}
If $1 < x \le 1+\varepsilon$, it follows that
\begin{align*}
\mathcal{C}_m \chi_{(1,1+\varepsilon)}(x) = \frac{m}{x^m} \int_1^x (x-t)^{m-1} \, dt = \frac{(x-1)^m}{x^m},
\end{align*}
and for $x > 1+\varepsilon$, we obtain
\begin{align*}
\mathcal{C}_m \chi_{(1,1+\varepsilon)}(x) = \frac{m}{x^m} \int_1^{1+\varepsilon} (x-t)^{m-1} \, dt = \frac{(x-1)^m - (x-1-\varepsilon)^m}{x^m}.
\end{align*}
Therefore, for all $x > 0$, we have
\begin{equation}
\label{C_mf}
\mathcal{C}_m \chi_{(1,1+\varepsilon)}(x) = \frac{(x-1)^m}{x^m} \chi_{(1,1+\varepsilon]}(x) + \frac{(x-1)^m - (x-1-\varepsilon)^m}{x^m} \chi_{(1+\varepsilon,\infty)}(x).
\end{equation}
Taking the $L^p$ norm in \eqref{C_mf} we get that
\begin{align*}
\| \mathcal{C}_m \chi_{(1,1+\varepsilon)}\|_p^p &= \int_1^{1+\varepsilon} \frac{(x-1)^{mp}}{x^{mp}}\, dx+\int_{1+\varepsilon}^\infty \frac{\big((x-1)^m-(x-1-\varepsilon)^m\big)^p}{x^{mp}}dx \\
&=I_1(\varepsilon)+I_2(\varepsilon).
\end{align*}
On the one hand, for $x \in (1,1+\varepsilon)$, we have $1\leq x^{mp} \leq (1+\varepsilon)^{mp}$, so:
\[
\frac{1}{(1+\varepsilon)^{mp}} \int_1^{1+\varepsilon} (x-1)^{mp}\,dx \le I_1(\varepsilon)\leq \int_1^{1+\varepsilon} (x-1)^{mp}\,dx.
\]
Evaluating $\int_1^{1+\varepsilon} (x-1)^{mp}\,dx= \frac{\varepsilon^{mp+1}}{mp+1}$:
\[
\frac{\varepsilon^{mp+1}}{(1+\varepsilon)^{mp}(mp+1)}  \le I_1(\varepsilon) \leq\frac{\varepsilon^{mp+1}}{mp+1}.
\]
Divide the entire inequality chain by $\varepsilon^p$: 
\[
\frac{\varepsilon^{(m-1)p+1}}{(1+\varepsilon)^{mp}(mp+1)}  \le \frac{ I_1(\varepsilon)}{\varepsilon^p} \leq\frac{\varepsilon^{(m-1)p+1}}{mp+1}.
\]
Since $m\geq 1$ and $p>1$, the exponent $(m-1)p+1>0$. Taking $\varepsilon \to 0^+$ sends both the left and right bounds to $0$. By the Squeeze Theorem:
$$\lim_{\varepsilon \to 0^+} \frac{ I_1(\varepsilon)}{\varepsilon^p} = 0.$$
On the other hand, by applying the change of variables $v=1-\frac{1}{x}$, we get
\begin{align*}
I_2(\varepsilon)&=\int_{1+\varepsilon}^\infty \frac{\big((x-1)^m-(x-1-\varepsilon)^m\big)^p}{x^{mp}}\, dx\\
&=\int_{1+\varepsilon}^\infty \bigg(\bigg(1-\frac{1}{x}\bigg)^m-\bigg(1-\frac{1}{x}-\frac{\varepsilon}{x}\bigg)^m\bigg)^p\, dx \\
&=\int_{\frac{\varepsilon}{1+\varepsilon}}^1 \big(v^m-(v-\varepsilon(1-v))^m\big)^p \frac{dv}{(1-v)^2}.
\end{align*}
Dividing by $\varepsilon^p$ we obtain that
$$ \frac{I_2(\varepsilon)}{\varepsilon^p}=\int_{\frac{\varepsilon}{1+\varepsilon}}^1 \bigg(\frac{v^m-(v-\varepsilon(1-v))^m}{\varepsilon}\bigg)^p \frac{dv}{(1-v)^2}.$$
Now define the function inside the integral as $K(\varepsilon,v)$:
$$K(\varepsilon,v)=\chi_{(\frac{\varepsilon}{1+\varepsilon},1)}(v)\cdot\bigg(\frac{v^m-(v-\varepsilon(1-v))^m}{\varepsilon}\bigg)^p \frac{1}{(1-v)^2}.$$ 
Fix $v \in (0, 1)$ and define $g(y) = y^m$. By setting $h = \varepsilon(1-v)$, we observe that $h \to 0^+$ as $\varepsilon \to 0^+$. Expressing the difference quotient in terms of $g$, we have:
\begin{equation*}
\lim_{\varepsilon \to 0^+} \frac{v^m - (v - \varepsilon(1-v))^m}{\varepsilon(1-v)} = \lim_{h \to 0^+} \frac{g(v) - g(v-h)}{h} = g'(v) = mv^{m-1}.
\end{equation*}
This means
$$\lim_{\varepsilon \to 0^+} \frac{v^m - (v - \varepsilon(1-v))^m}{\varepsilon}=mv^{m-1}(1-v).$$
Thus
$$\lim_{\varepsilon \to 0^+} K(\varepsilon,v)=\big(mv^{m-1}(1-v)\big)^p\frac{1}{(1-v)^2}=m^pv^{(m-1)p}(1-v)^{p-2}.$$
For a fixed $v \in (0,1)$ and $\varepsilon > 0$, by the Mean Value Theorem, there exists a number $\xi \in (v - \varepsilon(1-v), v)$ such that
\begin{align*}
K(\varepsilon,v) &\le \left( \frac{v^m - (v - \varepsilon(1-v))^m}{\varepsilon} \right)^p \frac{1}{(1-v)^2} \\
&= \big( m \xi^{m-1} (1-v) \big)^p \frac{1}{(1-v)^2} \\
&\le \big( m v^{m-1} (1-v) \big)^p \frac{1}{(1-v)^2} \\
&= m^p v^{(m-1)p} (1-v)^{p-2} =: h(v).
\end{align*} 
Since $h \in L^1(0,1)$, we may apply the Dominated Convergence Theorem to conclude that
\begin{align*}
\lim_{\varepsilon \to 0^+} \frac{I_2(\varepsilon)}{\varepsilon^p} 
&= \lim_{\varepsilon \to 0^+} \int_0^1 K(\varepsilon,v) \, dv 
= \int_0^1 \lim_{\varepsilon \to 0^+} K(\varepsilon,v) \, dv \\
&= \int_0^1 m^p v^{(m-1)p} (1-v)^{p-2} \, dv 
= m^p B\big( (m-1)p + 1, p - 1 \big).
\end{align*}
Hence
\begin{align*}
\lim_{\varepsilon \to 0^+}  \frac{\| \mathcal{C}_m \chi_{(1,1+\varepsilon)}\|_p^p}{\varepsilon^p}  = \lim_{\varepsilon \to 0^+} \bigg(\frac{I_1(\varepsilon)}{\varepsilon^p}+\frac{I_2(\varepsilon)}{\varepsilon^p}\bigg)
&= m^p B\big( (m-1)p + 1, p - 1 \big),
\end{align*}
for all $m \in \mathbb{N}$. Therefore
\begin{align*}
\lim_{\varepsilon \to 0^+} \frac{\| \mathcal{C}_m \chi_{(1,1+\varepsilon)}\|_p^p}{\| \mathcal{C}_{n} \chi_{(1,1+\varepsilon)}\|_p^p} &=\lim_{\varepsilon \to 0^+}  \bigg(\frac{\| \mathcal{C}_m \chi_{(1,1+\varepsilon)}\|_p^p}{\varepsilon^p} \frac{\varepsilon^p}{\| \mathcal{C}_{n} \chi_{(1,1+\varepsilon)}\|_p^p}\bigg) \\
&= \bigg(\frac{m}{n}\bigg)^p \frac{B\big( (m-1)p + 1, p - 1 \big)}{B\big( (n-1)p + 1, p - 1 \big)},
\end{align*}
for all $m,n \in \mathbb{N}$.

\medskip

We now consider the case $p=\infty$ in order to prove \eqref{Limit2}. Suppose that $m \ge 2$. Recall from \eqref{C_mf} that
\begin{equation*}
\mathcal{C}_m \chi_{(1,1+\varepsilon)}(x) = \frac{(x-1)^m}{x^m} \chi_{(1,1+\varepsilon]}(x) + \frac{(x-1)^m - (x-1-\varepsilon)^m}{x^m} \chi_{(1+\varepsilon,\infty)}(x).
\end{equation*}
Observe that $x \mapsto \mathcal{C}_m \chi_{(1,1+\varepsilon)}(x)$ is continuous on $\mathbb{R}^+$ and increasing on $(0,1+\varepsilon)$. Then, let $f_\varepsilon: (1+\varepsilon,\infty) \to \mathbb{R}$ be defined by 
\[
f_\varepsilon(x) = \frac{(x-1)^m - (x-1-\varepsilon)^m}{x^m}.
\]
Setting $h_\varepsilon(x) = f_\varepsilon(1/x) = (1-x)^m - \big(1 - x(1+\varepsilon)\big)^m$ for $x \in \big(0, \frac{1}{1+\varepsilon}\big)$, we differentiate $h_\varepsilon$ with respect to $x$:
\[
h_\varepsilon'(x) = -m(1-x)^{m-1} + m(1+\varepsilon)\big(1 - x(1+\varepsilon)\big)^{m-1}.
\]
Thus,
\[
h_\varepsilon'(x) = 0 \iff (1+\varepsilon)^{\frac{1}{m-1}}\big(1 - x(1+\varepsilon)\big) = 1 - x,
\]
which yields
\[
x = \frac{(1+\varepsilon)^{\frac{1}{m-1}} - 1}{(1+\varepsilon)^{\frac{m}{m-1}} - 1}.
\]
Consequently, $f_\varepsilon$ has a unique critical point at
\[
x_0 = \frac{(1+\varepsilon)^{\frac{m}{m-1}} - 1}{(1+\varepsilon)^{\frac{1}{m-1}} - 1} \in (1+\varepsilon,\infty).
\]
A straightforward calculation gives
\[
f_\varepsilon(x_0) = \frac{\varepsilon^m}{\left( (1+\varepsilon)^{\frac{m}{m-1}} - 1 \right)^{m-1}}.
\]
Moreover, since the function $x \mapsto (1 - 1/x)^m$ is strictly increasing on $(1,1+\varepsilon)$, we have
\[
\left(1 - \frac{1}{x}\right)^m \le \left(1 - \frac{1}{1+\varepsilon}\right)^m = \frac{\varepsilon^m}{(1+\varepsilon)^m} < \frac{\varepsilon^m}{\left( (1+\varepsilon)^{\frac{m}{m-1}} - 1 \right)^{m-1}}
\]
for all $x \in (1,1+\varepsilon)$. Therefore, the maximum of $\mathcal{C}_m\chi_{(1,1+\varepsilon)}$ is achieved at $x_0$, so
\[
\|\mathcal{C}_m\chi_{(1,1+\varepsilon)}\|_\infty = \frac{\varepsilon^m}{\left( (1+\varepsilon)^{\frac{m}{m-1}} - 1 \right)^{m-1}}
\]
for all $\varepsilon > 0$. Taking the limit as $\varepsilon \to 0^+$, we obtain
\begin{align*}
\lim_{\varepsilon \to 0^+} \frac{\varepsilon}{\|\mathcal{C}_m\chi_{(1,1+\varepsilon)}\|_\infty} 
&= \lim_{\varepsilon \to 0^+} \left( \frac{(1+\varepsilon)^{\frac{m}{m-1}} - 1}{\varepsilon} \right)^{m-1} = \left( \frac{m}{m-1} \right)^{m-1}.
\end{align*}
This implies that
\begin{equation}
\label{LimitXX}
\lim_{\varepsilon \to 0^+} \frac{\|\mathcal{C}_m\chi_{(1,1+\varepsilon)}\|_\infty}{\varepsilon} = \frac{(m-1)^{m-1}}{m^{m-1}}
\end{equation}
for all $m \in \mathbb{N}$ with $m \ge 2$.

For the case $m=1$, by using \eqref{C_mf}, we have
\[
\mathcal{C}_1 \chi_{(1,1+\varepsilon)}(x) = \left(1-\frac{1}{x}\right) \chi_{(1,1+\varepsilon]}(x) + \frac{\varepsilon}{x} \chi_{(1+\varepsilon,\infty)}(x) \quad \text{for all } x > 0.
\]
The supremum is clearly attained at $x = 1+\varepsilon$, yielding
\[
\|\mathcal{C}_1 \chi_{(1,1+\varepsilon)}\|_\infty = \mathcal{C}_1 \chi_{(1,1+\varepsilon)}(1+\varepsilon) = \frac{\varepsilon}{1+\varepsilon}.
\]
Consequently,
\begin{equation}
\label{LimitXXX}
\lim_{\varepsilon \to 0^+} \frac{\| \mathcal{C}_1 \chi_{(1,1+\varepsilon)}\|_\infty}{\varepsilon} = \lim_{\varepsilon \to 0^+} \frac{1}{1+\varepsilon} = 1.
\end{equation}
Combining \eqref{LimitXX} and \eqref{LimitXXX} under the convention $0^0 = 1$, we conclude that
\begin{equation}
\label{LimitX}
\lim_{\varepsilon \to 0^+} \frac{\|\mathcal{C}_m\chi_{(1,1+\varepsilon)}\|_\infty}{\varepsilon} = \frac{(m-1)^{m-1}}{m^{m-1}}
\end{equation}
holds for all $m \in \mathbb{N}$ with $m \ge 1$.
Applying \eqref{LimitX}, we then obtain
\begin{align*}
\lim_{\varepsilon \to 0^+} \frac{\| \mathcal{C}_m \chi_{(1,1+\varepsilon)}\|_\infty}{\| \mathcal{C}_{n} \chi_{(1,1+\varepsilon)}\|_\infty} &=\lim_{\varepsilon \to 0^+}  \bigg(\frac{\| \mathcal{C}_m \chi_{(1,1+\varepsilon)}\|_\infty}{\varepsilon} \frac{\varepsilon}{\| \mathcal{C}_{n} \chi_{(1,1+\varepsilon)}\|_\infty}\bigg) = \frac{(m-1)^{m-1}}{m^{m-1}} \frac{n^{n-1}}{(n-1)^{n-1}}.
\end{align*}
for all $m,n \in \mathbb{N}$.

\medskip

We now proceed to establish the limit asserted in \eqref{Limit3}. A direct evaluation yields
\[
\mathcal{C}_m^* \chi_{(1,1+\epsilon)}(x) = m \int_1^{1+\varepsilon} \frac{(t-x)^{m-1}}{t^m} \, dt \cdot \chi_{(0,1]}(x) + m \int_x^{1+\varepsilon} \frac{(t-x)^{m-1}}{t^m} \, dt \cdot \chi_{(1,1+\varepsilon]}(x).
\]
Taking the $L^p$-norm, we can decompose the expression into two integrals, $I_1(\varepsilon)$ and $I_2(\varepsilon)$:
\begin{align*}
\|\mathcal{C}_m^*\chi_{(1,1+\varepsilon)}\|_p^p &= m^p \int_0^1 \left( \int_1^{1+\varepsilon} \frac{(t-x)^{m-1}}{t^m} \, dt \right)^p dx  + m^p \int_1^{1+\varepsilon} \left( \int_x^{1+\varepsilon} \frac{(t-x)^{m-1}}{t^m} \, dt \right)^p dx \\
&=: I_1(\varepsilon) + I_2(\varepsilon).
\end{align*}
To analyze $I_1(\varepsilon)$, define the auxiliary function $g_\varepsilon(x)$ on $(0,1)$ by
\[
g_\varepsilon(x) := \frac{m}{\varepsilon} \int_1^{1+\varepsilon} \frac{(t-x)^{m-1}}{t^m} \, dt.
\]
By the Lebesgue Differentiation Theorem, as $\varepsilon \to 0^+$, we have the pointwise convergence
\[
\lim_{\varepsilon \to 0^+} g_\varepsilon(x) = m(1-x)^{m-1} \quad \text{for all } x \in (0,1),
\]
which implies that
\[
\lim_{\varepsilon \to 0^+} (g_\varepsilon(x))^p = m^p(1-x)^{p(m-1)}.
\]
For $0 < \varepsilon < 1$, observe that $t^{-m} \le 1$ for $t \in (1, 1+\varepsilon)$. Since $t \le 1+\varepsilon < 2$, we have
\[
t - x \le 2 - x < 2 \quad \text{for all } x \in (0,1) \text{ and } t \in (1, 1+\varepsilon).
\]
This provides a uniform upper bound for the integrand:
\[
m \frac{(t-x)^{m-1}}{t^m} \le m (t-x)^{m-1} \le m (2-x)^{m-1}.
\]
Integrating over $(1, 1+\varepsilon)$ gives
\[
g_\varepsilon(x) \le m(2-x)^{m-1},
\]
and consequently,
\[
(g_\varepsilon(x))^p \le m^p(2-x)^{p(m-1)} =: w(x) \quad \text{for all } x \in (0,1).
\]
Since $w(x)$ is continuous on the compact interval $[0,1]$, then $w \in L^1(0,1)$. Applying the Dominated Convergence Theorem, we obtain
\begin{align*}
\lim_{\varepsilon \to 0^+} \frac{I_1(\varepsilon)}{\varepsilon^p} &= \lim_{\varepsilon \to 0^+} \int_0^1 (g_\varepsilon(x))^p \, dx = \int_0^1 \lim_{\varepsilon \to 0^+} (g_\varepsilon(x))^p \, dx \\
&= m^p \int_0^1 (1-x)^{p(m-1)} \, dx = \frac{m^p}{p(m-1)+1}.
\end{align*}

Next, to estimate $I_2(\varepsilon)$, note that for $x \in (1, 1+\varepsilon)$ and $t \in (x, 1+\varepsilon)$,
\[
\frac{m(t-x)^{m-1}}{t^m} \le m (t-x)^{m-1} \le m \varepsilon^{m-1}.
\]
Integrating this bound over $(x, 1+\varepsilon)$ yields
\[
\int_x^{1+\varepsilon} \frac{m(t-x)^{m-1}}{t^m} \, dt 
\le \int_x^{1+\varepsilon} m \varepsilon^{m-1} \, dt 
\le \int_1^{1+\varepsilon} m \varepsilon^{m-1} \, dt 
= m \varepsilon^m.
\]
Thus, $I_2(\varepsilon)/\varepsilon^p$ can be bounded as
\[
0 \le \frac{I_2(\varepsilon)}{\varepsilon^p} 
\le \frac{1}{\varepsilon^p} \int_1^{1+\varepsilon} m^p \varepsilon^{mp} \, dx 
= m^p \varepsilon^{(m-1)p+1}.
\]
Since $(m-1)p+1 > 0$, taking $\varepsilon \to 0^+$ and applying the Squeeze Theorem gives
\[
\lim_{\varepsilon \to 0^+} \frac{I_2(\varepsilon)}{\varepsilon^p} = 0.
\]
Combining our results for $I_1(\varepsilon)$ and $I_2(\varepsilon)$, we find that
\[
\lim_{\varepsilon \to 0^+} \frac{\|\mathcal{C}_m^*\chi_{(1,1+\varepsilon)}\|_p^p}{\varepsilon^p} 
= \lim_{\varepsilon \to 0^+} \frac{I_1(\varepsilon)}{\varepsilon^p} + \lim_{\varepsilon \to 0^+} \frac{I_2(\varepsilon)}{\varepsilon^p} 
= \frac{m^p}{p(m-1)+1}.
\]
Finally, taking the ratio for $m$ and $m+1$, we conclude that
\begin{align*}
\lim_{\varepsilon \to 0^+} \frac{\| \mathcal{C}^*_m \chi_{(1,1+\varepsilon)}\|_p^p}{\| \mathcal{C}^*_{n} \chi_{(1,1+\varepsilon)}\|_p^p} &=\lim_{\varepsilon \to 0^+}  \bigg(\frac{\| \mathcal{C}^*_m \chi_{(1,1+\varepsilon)}\|_p^p}{\varepsilon^p} \frac{\varepsilon^p}{\| \mathcal{C}^*_{n} \chi_{(1,1+\varepsilon)}\|_p^p}\bigg) \\
&= \frac{m^p}{p(m-1)+1} \frac{p(n-1)+1}{n^p},
\end{align*}
for all $m,n \in \mathbb{N}$. This completes the proof.
\end{proof}
\end{lemma}

The following lemma is of importance. 
\begin{lemma}
\label{LemmaPhi}
For any natural numbers $1 \le k \le m$, let $\Theta_{m,k} \colon (0,1] \to \mathbb{R}$ be defined by
\[
\Theta_{m,k}(t) = \frac{1}{2} \cdot \frac{t^{m-k} \int_0^t (1-s)^{m-1}(t-s)^{k-1} \, ds + \int_0^t (t-s)^{m-1}(1-s)^{k-1} \, ds}{\int_0^t (1-s)^{m-1}(t-s)^{m-1} \, ds}.
\]
Then $\Theta_{m,k}$ is non-increasing on $(0,1]$.
\begin{proof}
Applying the change of variables $s = rt$ (so that $ds = t\,dr$), we obtain
\[
\Theta_{m,k}(t) = \frac{1}{2} \frac{ \int_0^{1} (1-rt)^{m-1}(1-r)^{k-1}\, dr + \int_0^{1} (1-r)^{m-1}(1-rt)^{k-1}\, dr}{\int_0^{1}(1-rt)^{m-1}(1-r)^{m-1}\,dr}.
\]
In order to see that $\Theta_{m,k}$ is non-increasing on $(0,1]$ we will show that the function $\Phi_{m,k}$ defined by 
$$\Phi_{m,k}(t):=2\Theta_{m,k}(-t)$$ 
is non-decreasing on $[-1,0)$. We have that
\begin{align*}
\Phi_{m,k}(t) &= \frac{ \int_0^{1} (1+rt)^{m-1}(1-r)^{k-1}\, dr + \int_0^{1} (1-r)^{m-1}(1+rt)^{k-1}\, dr}{\int_0^{1}(1+rt)^{m-1}(1-r)^{m-1}\,dr}.
\end{align*}
An application of the binomial theorem followed by straightforward integration yields
$$\int_0^{1} (1+rt)^{m-1}(1-r)^{k-1}\, dr= \sum_{j=0}^{m-1} \frac{(m-1)!(k-1)!}{(m-1-j)!(k+j)!}t^j,$$
$$\int_0^{1} (1+rt)^{k-1}(1-r)^{m-1}\, dr= \sum_{j=0}^{k-1} \frac{(m-1)!(k-1)!}{(k-1-j)!(m+j)!}t^j,$$
and
$$\int_0^{1} (1+rt)^{m-1}(1-r)^{m-1}\, dr= \sum_{j=0}^{m-1} \frac{(m-1)!(m-1)!}{(m-1-j)!(m+j)!}t^j.$$
Let $P_{m,k}$ and $Q_{m,k}$ be the polynomials of degree $m-1$ defined by
\begin{align*}
P_{m,k}(t)&=\int_0^{1} (1+rt)^{m-1}(1-r)^{k-1}\, dr+ \int_0^{1} (1+rt)^{k-1}(1-r)^{m-1}\, dr \\
&=\sum_{j=0}^{k-1} \bigg(\frac{(m-1)!(k-1)!}{(m-1-j)!(k+j)!}+\frac{(m-1)!(k-1)!}{(k-1-j)!(m+j)!}\bigg)t^j \\
&\qquad + \sum_{j=k}^{m-1} \frac{(m-1)!(k-1)!}{(m-1-j)!(k+j)!}t^j=\sum_{j=0}^{m-1}\alpha_j t^j.
\end{align*}
\begin{align*}
Q_{m,k}(t)=\int_0^{1} (1+rt)^{m-1}(1-r)^{m-1}\, dr= \sum_{j=0}^{m-1} \frac{(m-1)!(m-1)!}{(m-1-j)!(m+j)!}t^j=\sum_{j=0}^{m-1}\beta_j t^j.
\end{align*}
We have that
\begin{align*}
\Phi_{m,k}(t)= \frac{P_{m,k}(t)}{Q_{m,k}(t)}=\frac{\sum_{j=0}^{m-1}\alpha_j t^j}{\sum_{j=0}^{m-1}\beta_j t^j}.
\end{align*}
If $k = m$, we trivially obtain that $\Phi_{m,m}(t) = 2$. In the case when $k=m-1$, we have that
\begin{align*}
P_{m,m-1}(t)&=(m-1)!(m-2)!\sum_{j=0}^{m-2} \bigg( \frac{1}{(m-1-j)!(m-1+j)!}+\frac{1}{(m-2-j)!(m+j)!}\bigg)t^j  \\
& \qquad + (m-1)!(m-2)! \frac{t^{m-1}}{(2m-2)!} \\
&=(m-1)!(m-2)!\sum_{j=0}^{m-2} \frac{2m-1}{(m-1-j)!(m+j)!}t^j+(m-1)!(m-2)! \frac{t^{m-1}}{(2m-2)!} \\
&=(m-1)!(m-2)!\sum_{j=0}^{m-1} \frac{2m-1}{(m-1-j)!(m+j)!}t^j \\
&= \frac{2m-1}{m-1} \sum_{j=0}^{m-1} \frac{(m-1)!(m-1)!}{(m-1-j)!(m+j)!}t^j= \frac{2m-1}{m-1} Q_{m,m-1}(t).
\end{align*}
Consequently, $\Phi_{m,m-1}(t) = \frac{P_{m,m-1}(t)}{Q_{m,m-1}(t)} = \frac{2m-1}{m-1}$ is a constant function, which is trivially non-decreasing on $[-1,0)$.

\medskip

Now assume that $1\leq k < m-1$. Let $A=\{A(j)\}_{j=0}^{m-1}$ be the vector defined by
\begin{align*}
A(j)=\frac{\alpha_j}{\beta_j}= \left\{ \begin{array}{lcc}
              \frac{(k-1)!}{(m-1)!} \left( \frac{(m+j)!}{(k+j)!} + \frac{(m-1-j)!}{(k-1-j)!} \right), &   {\rm if}  & 0 \leq j \leq k-1,  \\
              \\ \frac{(k-1)!}{(m-1)!} \frac{(m+j)!}{(k+j)!}, &  {\rm if}  & k \leq j \leq m-1.
              \end{array}
    \right.
\end{align*}
We are going to show that $A$ is an increasing vector. Let $0 \leq j \leq k-2$. 
\begin{align*}
A(j+1)-A(j)&= (m-k) \frac{(k-1)!}{(m-1)!} \left( \frac{(m+j)!}{(k+j+1)!} - \frac{(m-2-j)!}{(k-1-j)!} \right) \\
&=(m-k) \frac{(k-1)!}{(m-1)!} \left( T_1 - T_2 \right),
\end{align*}
where
\begin{align*}
T_1&=\frac{(m+j)!}{(k+j+1)!}=(m+j)(m+j-1)\cdots (k+j+2)\\
&=\prod_{i=1}^{m-k-1} (k+j+1+i)=\prod_{i=1}^{m-k-1} T_1(i)
\end{align*} 
and 
\begin{align*}
T_2&=\frac{(m-2-j)!}{(k-1-j)!}=(m-2-j)(m-3-j)\cdots (k-j) \\
&=\prod_{i=1}^{m-k-1} (k-j-1+i)=\prod_{i=1}^{m-k-1} T_2(i).
\end{align*}
Since $T_1(i) \geq T_2(i)$ for all $1 \leq i \leq m-k-1$, we get $T_1-T_2 \geq 0$. Therefore
\[
    A(j+1)-A(j)= (m-k) \frac{(k-1)!}{(m-1)!}\big(T_1-T_2) \geq 0, \quad \text{for all } 0 \leq j \leq k-2.
\]
For $j = k - 1$, similar reasoning as above yields:
\begin{align*}
A(k) - A(k-1) = \frac{(k-1)!(m-k)}{(m-1)!} \left(\frac{(m+k-1)!}{(2k)!} - (m-k-1)! \right) \geq 0.
\end{align*}
Finally, for $k \leq j \leq m-2$, we have that
$$A(j+1)-A(j)= \frac{(k-1)!(m-k)}{(m-1)!} \frac{(m+j)!}{(k+j+1)!} \geq 0.$$
Therefore
$$A(j+1)\geq A(j), \ \ \ {\rm for\  all}\  0 \leq j \leq m-2,$$
which means that $A$ is an increasing vector. Direct application of \cite[Theorem 4.4]{XX} implies that $\Phi_{m,k}$ is increasing on $[-1,0)$, which in turn yields that $\Theta_{m,k}$ is decreasing on $(0,1]$. This completes the proof.
\end{proof}
\end{lemma}

The following lemma will be needed.

\begin{lemma}
\label{LemaKernel*} Let $m \in \mathbb{N}$ and let $(u,v) \in \mathbb{R}^2$ be a non-negative vector. Then
{\small
\begin{align}
\label{KernelsPunctIneq}
 \frac{1}{2} \bigg(  \frac{1}{u^m} \frac{1}{v^k} \int_0^{\min\{u,v\}} & (u-x)^{m-1}(v-x)^{k-1}\, dx+ \frac{1}{v^m} \frac{1}{u^k} \int_0^{\min\{u,v\}} (v-x)^{m-1}(u-x)^{k-1}\, dx \bigg) \nonumber \\
  &\geq \frac{2m-1}{m+k-1}  \frac{1}{u^m}\frac{1}{v^m} \int_0^{\min\{u,v\}} (u-x)^{m-1}(v-x)^{m-1}\,dx,
\end{align}}
for all $1 \leq k \leq m$. 
\begin{proof}
Let $m \in \mathbb{N}$ and $1 \leq k \leq m$. For all $1 \leq j \leq m$, define
\[
K_{m,j}(u,v) = \frac{1}{2} \bigg( \frac{1}{u^m v^j} \int_0^{\min\{u,v\}} (u-x)^{m-1}(v-x)^{j-1}  dx+ \frac{1}{v^m u^j} \int_0^{\min\{u,v\}} (v-x)^{m-1}(u-x)^{j-1}  dx \bigg).
\]
Then, \eqref{KernelsPunctIneq} is equivalent to 
\begin{equation}
\label{KernelsPunctIneq_2}
K_{m,k}(u,v) \geq \frac{2m-1}{m+k-1} K_{m,m}(u,v),
\end{equation}
for all $u,v > 0$ and all $1 \leq k \leq m$. Since $K_{m,k}$ is symmetric in $u$ and $v$, without loss of generality we may assume that $v \leq u$. Let $t=\frac{v}{u} \in (0,1]$. Then \eqref{KernelsPunctIneq_2} is equivalent to 
\begin{align*}
 \frac{1}{2} \bigg(  \frac{1}{u^{m+k}} \frac{1}{t^k} \int_0^{tu} & (u-x)^{m-1}(tu-x)^{k-1}\, dx+ \frac{1}{t^m} \frac{1}{u^{m+k}} \int_0^{tu} (tu-x)^{m-1}(u-x)^{k-1}\, dx \bigg) \nonumber \\
  &\geq \frac{2m-1}{m+k-1}  \frac{1}{u^{2m}}\frac{1}{t^m} \int_0^{tu }(u-x)^{m-1}(tu-x)^{m-1}\,dx,
\end{align*}
for all $0<u<\infty$ and $t \in (0,1]$. Now, by doing the change of variable $x=us$, we get that \eqref{KernelsPunctIneq_2} is equivalent to prove that
\begin{align}
\label{KernelsPunctIneq_3}
 \frac{1}{2}  \frac{ t^{m-k}  \int_0^{t} (1-s)^{m-1}(t-s)^{k-1}\, ds+  \int_0^{t} (t-s)^{m-1}(1-s)^{k-1}\, ds}{\int_0^{t }(1-s)^{m-1}(t-s)^{m-1}\,ds}\geq \frac{2m-1}{m+k-1},
\end{align}
for all $t \in (0,1]$. Let $\Theta_{m,k}:(0,1] \to \mathbb{R}$ be the function given by
$$\Theta_{m,k}(t)= \frac{1}{2}  \frac{ t^{m-k}  \int_0^{t} (1-s)^{m-1}(t-s)^{k-1}\, ds+  \int_0^{t} (t-s)^{m-1}(1-s)^{k-1}\, ds}{\int_0^{t }(1-s)^{m-1}(t-s)^{m-1}\,ds}.$$
Note that 
\begin{align*}
\Theta_{m,k}(1)=\frac{\int_0^1 (1-s)^{m+k-2}\,ds }{\int_0^1 (1-s)^{2m-2}\,ds}=\frac{2m-1}{m+k-1}.
\end{align*}
Thus, to prove \eqref{KernelsPunctIneq_3}, it suffices to show that $\Theta_{m,k}$ is non-increasing on $(0,1]$, which is precisely the content of Lemma~\ref{LemmaPhi}.
\end{proof}
\end{lemma}

\section{Proofs of main results}
\label{ProofsMainResults}

Let $m, n \in \mathbb{N}$, and let $T_n$ and $T_m$ be two positive, bounded linear operators on $L^p(\mathbb{R}^+)$. Before presenting the main proofs, we note that any inequality of the form
\begin{equation}\label{General_Inequality}
\|T_n f\|_p \le C(n,m,p) \|T_m f\|_p
\end{equation}
needs only to be established for functions $f \in C_c^+(\mathbb{R}^+)$, the space of continuous, non-negative functions with compact support in $\mathbb{R}^+$. Indeed, for an arbitrary non-negative measurable function $f$, there exists a sequence $\{f_i\}_{i \in \mathbb{N}} \subset C_c^+(\mathbb{R}^+)$ such that 
\[
0 \le f_1(x) \le f_2(x) \le \cdots \leq f_i(x)\uparrow f(x) \quad \text{pointwise almost everywhere as } i \to \infty.
\]
Since $T_n$ and $T_m$ are positive operators, positivity implies monotonicity:
\[
0 \le T_n f_1(x) \le T_n f_2(x) \le \cdots \leq T_nf_i(x) \uparrow T_n f(x) \quad \text{pointwise almost everywhere as } i \to \infty.
\]
By the Monotone Convergence Theorem, for almost every $x > 0$:
\[
\lim_{i \to \infty} T_n f_i(x) = T_n f(x) \quad \text{and} \quad \lim_{i \to \infty} T_m f_i(x) = T_m f(x).
\]
Applying the Monotone Convergence Theorem once more to the increasing sequences $|T_n f_i(x)|^p \uparrow |T_n f(x)|^p$ and $|T_m f_i(x)|^p \uparrow |T_m f(x)|^p$, we obtain:
\[
\lim_{i \to \infty} \|T_n f_i\|_p = \|T_n f\|_p \quad \text{and} \quad \lim_{i \to \infty} \|T_m f_i\|_p = \|T_m f\|_p.
\]
Assuming that \eqref{General_Inequality} holds for each $f_i \in C_c^+(\mathbb{R}^+)$, taking $i \to \infty$ yields:
\[
\|T_n f\|_p = \lim_{i \to \infty} \|T_n f_i\|_p \le C(n,m,p) \lim_{i \to \infty} \|T_m f_i\|_p = C(n,m,p) \|T_m f\|_p.
\]
Therefore, the inequality
\[
\|\mathcal{C}_n f\|_p \le C(n,m,p) \|\mathcal{C}_m f\|_p
\]
holds for every non-negative measurable function $f$. If $\|\mathcal{C}_m f\|_p = \infty$, the right-hand side is infinite, making the inequality trivially true. If $\|\mathcal{C}_m f\|_p < \infty$, the inequality guarantees that $\|\mathcal{C}_n f\|_p$ is also finite and bounded appropriately.

\medskip

\noindent\textit{\textbf{Proof of Theorem \ref{Theorem 3.1.}.}}
We begin with inequality \eqref{RelationCmCn} in the case $n = m \in \mathbb{N}$ and $k = 1$. Let $f \in C_c^+(\mathbb{R}^+)$. We have seen in \eqref{Identity1} that 
\begin{equation*}
\mathcal{C}_{m+1}f(x) = \frac{m+1}{x^{m+1}}\int_0^x t^m \mathcal{C}_{m} f(t)\, dt,
\end{equation*}
for all $x>0$. By applying the change of variable $u = t/x$ in the identity above gives
\begin{align}
\label{Identity00}
    \mathcal{C}_{m+1}f(x) = (m+1) \int_0^1 u^m \mathcal{C}_{m} f(ux)\, du,
\end{align}
for all $x>0$. We start with the case $p=\infty$. Taking \eqref{Identity00} into account, we get the following.
\begin{align*}
\mathcal{C}_{m+1}f(x)  = (m+1) \int_0^1 u^m \mathcal{C}_{m} f(ux)\, du \leq \|\mathcal{C}_{m} f\|_{\infty} (m+1) \int_0^1 u^m \, du= \|\mathcal{C}_{m} f\|_{\infty},
\end{align*}
for all $x>0$. This implies 
$$\|\mathcal{C}_{m+1} f\|_{\infty} \leq \|\mathcal{C}_{m} f\|_{\infty},$$
for all $f \in C_c^+(\mathbb{R}^+)$. 

\medskip

Now, we turn to the case when $1<p<\infty$. Taking the $L^p$-norm on both sides of the identity \eqref{Identity00} and applying Minkowski's integral inequality, we find that
\begin{align*}
    \|\mathcal{C}_{m+1}f\|_p &= (m+1) \left( \int_0^\infty \left( \int_0^1 u^m \mathcal{C}_{m} f(ux)\, du \right)^p dx \right)^{1/p} \\
    &\leq (m+1) \int_0^1 \left(\int_0^\infty \big(u^m \mathcal{C}_m f(ux)\big)^p\,dx\right)^{1/p} du \\
    &= (m+1) \int_0^1 u^m \left( \int_0^\infty (\mathcal{C}_mf(ux))^p \,dx \right)^{1/p} du.
\end{align*}
Making the substitution $y = ux$ inside the inner integral (yielding $dx = u^{-1}dy$) decouples the variables, leading to
\begin{align*}
    \|\mathcal{C}_{m+1}f\|_p &\leq (m+1) \int_0^1 u^{m - 1/p} \,du \left( \int_0^\infty (\mathcal{C}_mf(y))^p \,dy \right)^{1/p} \\
    &= \frac{m+1}{m + 1 - 1/p} \|\mathcal{C}_mf\|_p = \frac{m+1}{m + 1/p'} \|\mathcal{C}_mf\|_p.
\end{align*}
Thus, we establish the norm recurrence relation
\begin{equation}
\label{INEQ}
\|\mathcal{C}_{m+1}f\|_p \leq \frac{\Gamma(m+2) \Gamma\left(m + \frac{1}{p'}\right)}{\Gamma(m+1) \Gamma\left(m + 1 + \frac{1}{p'}\right)} \|\mathcal{C}_{m}f\|_p=\frac{\frac{\Gamma\left(m + \frac{1}{p'}\right)}{\Gamma(m+1) }}{\frac{\Gamma\left(m + 1 + \frac{1}{p'}\right)}{\Gamma(m+2)}}\|\mathcal{C}_{m}f\|_p
\end{equation} 
for all $1<p\leq \infty$ and for all $f \in C_c^+(\mathbb{R}^+)$. Combining \eqref{INEQ} and Lemma~\ref{ElementaryLemma}, we obtain
\begin{align*}
    \|\mathcal{C}_{n+k}f\|_p  &\leq  \frac{\Gamma(n+k+1) \Gamma\left(n + \frac{1}{p'}\right)}{\Gamma(n+1) \Gamma\left(n+k + \frac{1}{p'}\right)} \|\mathcal{C}_{n}f\|_p=\frac{B\!\left(n + \frac{1}{p'},\, k\right)}{B(n+1,\, k)}\|\mathcal{C}_{n}f\|_p,
\end{align*}
for all $f \in C_c^+(\mathbb{R}^+)$.

To establish the sharpness of the constant in \eqref{RelationCmCn}, we proceed by contradiction. Suppose that there exists a constant $0 < C < B\big(n + \frac{1}{p'}, k\big)/B(n+1,\, k)$ such that
\[
\|\mathcal{C}_{n+k}f\|_p \le C \|\mathcal{C}_{n}f\|_p,
\]
for all $f \in \mathcal{M}^+(\mathbb{R}^+)$. Iterating inequality \eqref{INEQ} yields
\begin{align*}
\|\mathcal{C}_{n+k}|g|\|_p &\le  C\|\mathcal{C}_{n}|g|\|_p \le C \frac{\Gamma(n+1)\Gamma\left(n - 1 + \frac{1}{p'}\right)}{\Gamma(n)\Gamma\left(n + \frac{1}{p'}\right)} \|\mathcal{C}_{n-1}|g|\|_p \\
&\le \dots \le C \cdot \Gamma\left(\frac{1}{p'}\right) \frac{\Gamma(n+1)}{\Gamma\left(n + \frac{1}{p'}\right)} \|\mathcal{C}_0 |g|\|_p  \\
&= C \cdot \Gamma\left(\frac{1}{p'}\right) \frac{\Gamma(n+1)}{\Gamma\left(n + \frac{1}{p'}\right)} \|g\|_p,
\end{align*}
for all $g\in L^p(\mathbb{R}^+)$ with $1 < p \le \infty$. Consequently,
\[
\|\mathcal{C}_{n+k}g\|_p \le \|\mathcal{C}_{n+k}|g|\|_p \le C \cdot \Gamma\left(\frac{1}{p'}\right) \frac{\Gamma(n+1)}{\Gamma\left(n + \frac{1}{p'}\right)} \|g\|_p,
\]
for all $g\in L^p(\mathbb{R}^+)$ with $1 < p \le \infty$. This implies that the operator norm of $\mathcal{C}_{n+k}$ satisfies
\begin{align*}
\|\mathcal{C}_{n+k}\|_{L^p(\mathbb{R}^+)} 
&\le C \cdot \Gamma\left(\frac{1}{p'}\right) \frac{\Gamma(n+1)}{\Gamma\left(n + \frac{1}{p'}\right)} < \frac{\Gamma(n+k+1) \Gamma\left(n + \frac{1}{p'}\right)}{\Gamma(n+1) \Gamma\left(n+k + \frac{1}{p'}\right)} \cdot \Gamma\left(\frac{1}{p'}\right) \frac{\Gamma(n+1)}{\Gamma\left(n + \frac{1}{p'}\right)}\\& = \frac{\Gamma(n+k+1)\Gamma(1/p')}{\Gamma(n+k+1/p')},
\end{align*}
which contradicts the sharpness of inequality \eqref{BoundednessC}.

\medskip

We now turn to inequality \eqref{RelationC*m*Cn}. We begin with the case $p=1$. Let $f \in \mathcal{M}^+(\mathbb{R}^+)$. Applying identity \eqref{Identity2} and Tonelli's theorem, we obtain
\begin{align*}
\|\mathcal{C}_{m+1}^* f \|_1 &= \int_0^\infty \mathcal{C}^*_{m+1} f(x) \, dx = (m+1) \int_0^\infty x^m \int_x^\infty \frac{\mathcal{C}^*_m f(t)}{t^{m+1}} \, dt \, dx \\
&= (m+1) \int_0^\infty \frac{\mathcal{C}^*_m f(t)}{t^{m+1}} \left( \int_0^t x^m \, dx \right) dt = \int_0^\infty \mathcal{C}^*_m f(t) \, dt = \|\mathcal{C}_{m}^* f \|_1 
\end{align*}
for all $m \in \mathbb{N}$. Consequently, this yields
\begin{equation}
\label{C^*_1}
\|\mathcal{C}_{n+k}^* f \|_1 = \|\mathcal{C}_{n}^* f \|_1
\end{equation}
for all $n, k \in \mathbb{N}$. The proof of inequality \eqref{RelationC*m*Cn} for $1 < p < \infty$, along with its sharpness, proceeds in a manner completely analogous to that of \eqref{RelationCmCn}. One simply replaces the use of \eqref{Identity1} with \eqref{Identity2}, and substitutes \eqref{BoundednessC*} for \eqref{BoundednessC} when establishing optimality. This completes the proof. \hfill $\square$

\medskip

\noindent\textit{\textbf{Proof of Theorem \ref{Theorem3.2.}.}}
{\normalfont
We begin with inequality \eqref{IneqCnInfty} in the case $n = m \in \mathbb{N}$ and $k = 1$. The operators $\mathcal{C}_m$ and $\mathcal{C}_{m+1}$ can be expressed as integral operators by
\[
\mathcal{C}_{m}f(x) = \int_0^x f(t) K_m(x,t) \, dt \quad \text{for } x > 0,
\]
and
\[
\mathcal{C}_{m+1}f(u) = \int_0^u f(t) K_{m+1}(u,t) \, dt \quad \text{for } u > 0,
\]
where the respective kernels are given by
\[
K_m(x,t) = \frac{m(x-t)^{m-1}}{x^m} \quad \text{for } 0 < t \le x,
\]
and
\[
K_{m+1}(u,t) = \frac{(m+1)(u-t)^{m}}{u^{m+1}} \quad \text{for } 0 < t \le u.
\]
For a fixed $x > 0$, we set $u_x = \frac{m+1}{m}x$. Since $u_x > x$, the inclusion $(0,x) \subset (0,u_x)$ follows immediately. Let $R$ be the function defined on $(0,x)$ by
$$R(t)=\frac{K_m(x,t)}{K_{m+1}(u_x,t)}=\frac{m}{m+1} \frac{u_x^{m+1}}{x^m} \frac{(x-t)^{m-1}}{(u_x-t)^m}.$$
Substituting $u_x = \frac{m+1}{m}x$ we get
$$R(t)= \bigg(\frac{m+1}{m}\bigg)^m x \frac{(x-t)^{m-1}}{\big(\frac{m+1}{m}x-t\big)^m}=\bigg(\frac{m}{m+1}\bigg)^m x g(t),$$
where $g(t)=(x-t)^{m-1}\big(\frac{m+1}{m}x-t\big)^{-m}$. Now, taking derivative, and simple computations lead to
$$\bigg(\frac{m+1}{m}x-t\bigg)^{2m} \frac{d}{dt} g(t)= (x-t)^{m-2}\bigg(\frac{m+1}{m}x-t\bigg)^{m-1}\bigg(\frac{x}{m}-t\bigg).$$
Since $g'(t)>0$ for $0 < t< \frac{x}{m}$ and $g'(t)<0$ for $\frac{x}{m} <t<x$, we have that $g$ attains its unique maximum on $(0,x)$ at $t_0=\frac{x}{m}$. Evaluating $g(t_0)$ gives
$$g(t_0)= \frac{\big(\frac{m-1}{m}\big)^{m-1}x^{m-1}}{x^m}=\bigg(\frac{m-1}{m}\bigg)^{m-1}\frac{1}{x}.$$
Now, substituting $g(t_0)$ back into $R(t)$:
\begin{align*}
\underset{t \in (0,x)}{\max}R(t)=R(t_0)= \bigg(\frac{m+1}{m}\bigg)^m x\bigg(\frac{m-1}{m}\bigg)^{m-1}\frac{1}{x}=\frac{(m+1)^m(m-1)^{m-1}}{m^{2m-1}}:=L_m.
\end{align*}
This establishes that 
$$K_m(x,t) \leq L_m K_{m+1}\bigg(\frac{m+1}{m}x,t\bigg)$$
for all $t\in (0,x)$. Thus
\begin{align*}
\mathcal{C}_mf(x)&=\int_0^x f(t) K_m(x,t)\,dt \leq L_m \int_0^x f(t) K_{m+1}\bigg(\frac{m+1}{m}x,t\bigg)\,dt \\
&\leq L_m \int_0^{\frac{m+1}{m}x} f(t) K_{m+1}\bigg(\frac{m+1}{m}x,t\bigg)\,dt =L_m \mathcal{C}_{m+1}f\bigg(\frac{m+1}{m}x\bigg)\\ &\leq L_m \|\mathcal{C}_{m+1}f\|_\infty,
\end{align*}
for all $x>0$. Therefore
\begin{equation}
\label{InequalityY}
\|\mathcal{C}_{m}f\|_\infty \leq \frac{(m-1)^{m-1}}{m^{m-1}} \frac{(m+1)^m}{m^m} \|\mathcal{C}_{m+1}f\|_\infty.
\end{equation}
 This establishes \eqref{IneqCnInfty} for the case $n=m$ and $k=1$. Combining \eqref{InequalityY} and Lemma~\ref{ElementaryLemma}, we obtain
\begin{align*}
\|\mathcal{C}_{n}f\|_\infty \le \frac{(n-1)^{n-1}}{n^{n-1}} \frac{(n+k)^{n+k-1}}{(n+k-1)^{n+k-1}} \|\mathcal{C}_{n+k}f\|_\infty.
\end{align*}
This completes the proof of \eqref{IneqCnInfty}. The optimality of the constant in \eqref{IneqCnInfty} follows from \eqref{Limit2}.

\medskip

We now consider the case $p=2$. Since $\|\mathcal{C}_m f\|_2 = \|\mathcal{C}_m^* f\|_2$ for every $m \in \mathbb{N}$, applying inequality \eqref{C*_nC_^*Inequality} (the reader may check that its proof is independent) yields
\begin{align*}
\|\mathcal{C}_n f \|_2^2 &=\|\mathcal{C}_n^* f \|_2^2 \leq \frac{n^2}{2(n-1)+1} \frac{2(n+k-1)+1}{(n+k)^2} \|\mathcal{C}^*_{n+k} f \|_2^2 \\
&= \frac{n^2}{2(n-1)+1} \frac{2(n+k-1)+1}{(n+k)^2}  \|\mathcal{C}_{n+k} f \|_2^2 \\
&=\frac{n^2}{(n+k)^2}\frac{B\big( 2(n-1)+1,1\big)}{B\big(2(n+k-1)+1,1\big)}\|\mathcal{C}_{n+k} f \|_2^2.
\end{align*}
This completes the proof of inequality \eqref{C_nC_mInequality}. The optimality of the constant in \eqref{C_nC_mInequality} is an immediate consequence of \eqref{Limit1}. \hfill $\square$

\medskip

\noindent\textit{\textbf{Proof of Theorem \ref{Theorem3.6.}.}} The case $p=1$ is a direct consequence of the identity \eqref{C^*_1}. Let us now turn to the case $p=2$. We start proving inequality \eqref{C*_nC_^*Inequality} when $k=1$. We have to show that
\begin{equation}
\label{Ineq_p=2}
\|\mathcal{C}_m^* f \|_2^2 \leq \frac{m^2}{2(m-1)+1} \frac{2m+1} {(m+1)^2} \|\mathcal{C}_{m+1}^* f \|_2^2,
\end{equation}
for all $f \in C_c^+(\mathbb{R}^+)$ and for all $m \in \mathbb{N}$.
 Let $m \in \mathbb{N}$ and let $f \in C_c^+(\mathbb{R}^+)$. Then, applying integration by parts and differentiation under the integral sign yields
\begin{align*}
(m+1&)^{-2} \|\mathcal{C}^*_{m+1}f\|_2^2 = \int_0^\infty \bigg(\int_x^\infty \frac{f(t)}{t^{m+1}}(t-x)^m\, dt\bigg)^2\,dx \\
&= \int_0^\infty x^{2m} \bigg(\int_x^\infty \frac{f(t)}{t^{m+1}}\bigg(\frac{t}{x}-1\bigg)^m\, dt\bigg)^2\,dx\\
&=\frac{2m}{2m+1} \int_0^\infty x^{2m+1} \bigg(\int_x^\infty \frac{f(t)}{t^{m+1}}\bigg(\frac{t}{x}-1\bigg)^m dt\bigg)\bigg(\int_x^\infty \frac{f(u)}{u^{m+1}}\bigg(\frac{u}{x}-1\bigg)^{m-1}\frac{u}{x^2} du\bigg) dx \\
&=\frac{2m}{2m+1} \int_0^\infty  \bigg(\int_x^\infty \frac{f(t)}{t^{m+1}}(t-x)^m dt\bigg)\bigg(\int_x^\infty \frac{f(u)}{u^{m}}(u-x)^{m-1}\, du\bigg) dx.
\end{align*}
Using \eqref{Identity2}, we obtain
\begin{align*}
\int_x^\infty \frac{f(t)}{t^{m+1}}(t-x)^m\, dt&= \frac{\mathcal{C}_{m+1}^* f(x)}{m+1}=x^m \int_x^\infty \frac{\mathcal{C}^*_m f(t)}{t^{m+1}}\,dt\\
&=m x^m \int_x^\infty \frac{1}{t^{m+1}} \bigg(\int_t^\infty \frac{f(s)}{s^m}(s-t)^{m-1}\,ds \bigg)\,dt,
\end{align*}
for all $x>0$. Hence,
\begin{align*}
(m+1&)^{-2} \|\mathcal{C}^*_{m+1}f\|_2^2\\
&=\frac{2m^2}{2m+1} \int_0^\infty x^m \int_x^\infty \frac{1}{t^{m+1}} \int_t^\infty \frac{f(s)}{s^m}(s-t)^{m-1}ds \,dt\bigg(\int_x^\infty \frac{f(u)}{u^{m}}(u-x)^{m-1} du\bigg) dx.
\end{align*}
It follows from Tonelli's theorem that
\begin{align*}
(m+1&)^{-2} \|\mathcal{C}^*_{m+1}f\|_2^2\\
&=\frac{2m^2}{2m+1} \int_0^\infty \bigg(\int_t^\infty \frac{f(s)}{s^m}(s-t)^{m-1}\,ds \bigg) \frac{1}{t^{m+1}} \int_0^t x^m \int_x^\infty \frac{f(u)}{u^{m}}(u-x)^{m-1}\, du \,dx\, dt \\
&\geq \frac{2m^2}{2m+1} \int_0^\infty \bigg(\int_t^\infty \frac{f(s)}{s^m}(s-t)^{m-1}\,ds \bigg) \frac{1}{t^{m+1}} \int_0^t x^m \int_t^\infty \frac{f(u)}{u^{m}}(u-x)^{m-1}\, du \,dx\, dt.
\end{align*}
Now, for any $t > 0$, using the change of variables $v =x/t$ (so that $dv = dx/t$), we have
\begin{align*}
\frac{1}{t^{m+1}} \int_0^t x^m \int_t^\infty \frac{f(u)}{u^{m}}(u-x)^{m-1}\, du \,dx&= \frac{1}{t^{m+1}} \int_t^\infty \frac{f(u)}{u^m} \int_0^t x^m (u-x)^{m-1} \,dx \,du \\
& =  \int_t^\infty \frac{f(u)}{u^m} \int_0^1 v^m (u-tv)^{m-1} \, dv\,du \\
&= \int_t^\infty \frac{f(u)}{u} \int_0^1 v^m \bigg(1-\frac{t}{u}v\bigg)^{m-1} \, dv\,du.
\end{align*}
Evaluating the elementary integrals in terms of the Beta function yields
\begin{align*}
 \int_0^1 v^m \bigg(1-\frac{t}{u}v\bigg)^{m-1} \, dv & = \int_0^1 v^m \bigg(1-v+\bigg(1-\frac{t}{u}\bigg)v\bigg)^{m-1} \, dv \\
 &= \sum_{j=0}^{m-1} \binom{m-1}{j} \bigg(1-\frac{t}{u}\bigg)^j \int_0^1 v^{m+j}(1-v)^{m-1-j} dv \\
 &=\sum_{j=0}^{m-1} \frac{(m-1)!(m+j)!}{j! (2m)!}\bigg(1-\frac{t}{u}\bigg)^j.
\end{align*}
Thus,
\begin{align*}
\frac{1}{t^{m+1}} \int_0^t x^m \int_t^\infty \frac{f(u)}{u^{m}}(u-x)^{m-1}\, du \,dx= \sum_{j=0}^{m-1} \frac{(m-1)!(m+j)!}{j! (2m)!} \int_t^\infty \frac{f(u)}{u^{j+1}}(u-t)^j \,du,
\end{align*}
for all $t > 0$. Therefore,
\begin{align*}
(m+1&)^{-2} \|\mathcal{C}^*_{m+1}f\|_2^2\\
&\geq \frac{2m^2}{2m+1} \int_0^\infty \bigg(\int_t^\infty \frac{f(s)}{s^m}(s-t)^{m-1}\,ds \bigg) \frac{1}{t^{m+1}} \int_0^t x^m \int_t^\infty \frac{f(u)}{u^{m}}(u-x)^{m-1}\, du \,dx\, dt \\
&= \frac{2m^2}{2m+1}  \sum_{j=0}^{m-1} \frac{(m-1)!(m+j)!}{j! (2m)!}  \int_0^\infty \int_t^\infty \frac{f(s)}{s^m}(s-t)^{m-1}\,ds\int_t^\infty \frac{f(u)}{u^{j+1}}(u-t)^j \,du\,dt \\
&=\frac{1}{2m+1}  \sum_{j=0}^{m-1} \frac{m!(m+j)!}{j! (2m-1)!}  \int_0^\infty \int_t^\infty \frac{f(s)}{s^m}(s-t)^{m-1}\,ds\int_t^\infty \frac{f(u)}{u^{j+1}}(u-t)^j \,du\,dt.\\
&=\frac{1}{2m+1}  \sum_{j=0}^{m-1} \frac{m!(m+j)!}{j! (2m-1)!} \int_0^\infty \int_0^\infty \frac{f(s)}{s^m}\frac{f(u)}{u^{j+1}}\int_0^{\min\{u,s\}} (s-t)^{m-1}(u-t)^j dtduds \\
&=\frac{1}{2(2m+1)} \sum_{j=0}^{m-1} \frac{m!(m+j)!}{j! (2m-1)!}  \Bigg(\int_0^\infty \int_0^\infty \frac{f(s)}{s^m}\frac{f(u)}{u^{j+1}}\int_0^{\min\{u,s\}} (s-t)^{m-1}(u-t)^j dtduds \\
&\qquad + \int_0^\infty \int_0^\infty \frac{f(u)}{u^m}\frac{f(s)}{s^{j+1}}\int_0^{\min\{u,s\}} (u-t)^{m-1}(s-t)^j dtduds \Bigg)\\
&\geq \frac{2m-1}{2m+1} \sum_{j=0}^{m-1} \frac{m!(m+j)!}{j! (2m-1)!} \frac{1}{m+j} \int_0^\infty \int_0^\infty \frac{f(u)}{u^m}\frac{f(s)}{s^m} \int_0^{\min\{u,s\}} (u-x)^{m-1}(s-x)^{m-1}dtduds
\end{align*}
where we have applied Lemma~\ref{LemaKernel*} in the last inequality. Note that
\begin{align*}
\sum_{j=0}^{m-1} \frac{m!(m+j)!}{j! (2m-1)!} \frac{1}{m+j} &= \frac{m! (m-1)!}{(2m-1)!} \sum_{j=0}^{m-1} \frac{(m+j-1)!}{j!(m-1)!} = \frac{1}{\binom{2m-1}{m}} \sum_{j=0}^{m-1}\binom{m+j-1}{j} \\
&=\frac{1}{\binom{2m-1}{m}} \binom{2m-1}{m-1} = \frac{1}{\binom{2m-1}{m}} \binom{2m-1}{m}=1.
\end{align*}
Therefore,
\begin{align*}
(m+1)^{-2} \|\mathcal{C}^*_{m+1}f\|_2^2 & \geq \frac{2m-1}{2m+1} \int_0^\infty \int_0^\infty \frac{f(u)}{u^m}\frac{f(s)}{s^m} \int_0^{\min\{u,s\}} (u-x)^{m-1}(s-x)^{m-1}dtduds \\
& = \frac{2m-1}{2m+1} m^{-2} \|\mathcal{C}_m^* f\|_2^2.
\end{align*}
That is 
\begin{equation}
    \label{ImportanteProposition}
  \|\mathcal{C}_m^* f\|_2^2 \leq \bigg(\frac{m}{m+1}\bigg)^2  \frac{2m+1}{2(m-1)+1}  \|\mathcal{C}^*_{m+1}f\|_2^2.
\end{equation}
Combining \eqref{ImportanteProposition} and Lemma~\ref{ElementaryLemma}, we obtain
\begin{align*}
\|\mathcal{C}_n^* f \|_2^2 &\leq  \frac{n^2}{2(n-1)+1} \frac{2(n+k-1)+1}{(n+k)^2}  \|\mathcal{C}^*_{n+k} f \|_2^2.
\end{align*}
This proves inequality~\eqref{C*_nC_^*Inequality} when $p=2$. The proof that the constant in \eqref{C*_nC_^*Inequality} is sharp follows directly from \eqref{Limit3}.
\hfill $\square$

\medskip

\noindent\textit{\textbf{Proof of Theorem \ref{Theorem3.3.}.}}
Let $1 < p < \infty$ and let $f \in C_c^+(\mathbb{R}^+)$. We start by assuming that $1<p<2$. By using integration by parts we obtain that 
\begin{align*}
&2^{-p} \|\mathcal{C}_{2}f\|_p^p = \int_0^\infty x^{-2p} \bigg(\int_0^x f(t)(x-t)\,dt\bigg)^p\, dx\\
& =\frac{p}{2p-1}\int_0^\infty \frac{1}{x^{2p-1}}\bigg(\int_0^x f(t)(x-t)\,dt\bigg)^{p-1}\bigg(\int_0^x f(u)\,du\bigg)\, dx. 
\end{align*}
By applying Jensen's inequality, we get that
\begin{align*}
&\bigg(\int_0^x f(t)(x-t)\,dt\bigg )^{p-1} \geq \bigg(\int_0^x f(u)\,du\bigg)^{p-2}\bigg(\int_0^x f(t)(x-t)^{p-1}\,dt\bigg),
\end{align*}
for all $x>0$. Now, by using integration by parts we obtain that
\begin{align*}
\int_0^x f(t)(x-t)^{p-1}\,dt=(p-1) \int_0^x \bigg(\int_0^t f(w)\,dw\bigg)(x-t)^{p-2}\,dt,
\end{align*}
for all $x>0$. So
\[
\bigg(\int_0^x f(t)(x-t)\,dt\bigg )^{p-1} \geq (p-1) \bigg(\int_0^x f(u)\,du\bigg)^{p-2}\int_0^x \bigg(\int_0^t f(w)\,dw\bigg)(x-t)^{p-2}\,dt,
\]
for all $x>0$. Therefore
\begin{align*}
 2^{-p} \|\mathcal{C}_{2}f\|_p^p & = \frac{p}{2p-1}\int_0^\infty \frac{1}{x^{2p-1}}\bigg(\int_0^x f(t)(x-t)\,dt\bigg)^{p-1}\bigg(\int_0^x f(u)\,du\bigg)\, dx\\
&  \geq \frac{p(p-1)}{2p-1} \int_0^\infty \frac{1}{x^{2p-1}} \bigg(\int_0^x f(u)\,du\bigg)^{p-1}\int_0^x \bigg(\int_0^t f(w)\,dw\bigg)(x-t)^{p-2}\,dt \,dx \\
&  = \frac{p(p-1)}{2p-1}  \int_0^\infty \bigg(\int_0^t f(w)\,dw\bigg) \int_t^\infty \bigg(\int_0^x f(u)\,du\bigg)^{p-1} \frac{(x-t)^{p-2}}{x^{2p-1}}\,dx \, dt \\
&  \geq \frac{p(p-1)}{2p-1}  \int_0^\infty \bigg(\int_0^t f(w)\,dw\bigg)^p \int_t^\infty \frac{(x-t)^{p-2}}{x^{2p-1}}\,dx \, dt \\
&  = \frac{p(p-1)}{2p-1}  \int_0^\infty \bigg(\frac{1}{t}\int_0^t f(w)\,dw\bigg)^p \int_t^\infty \frac{(x/t-1)^{p-2}}{(x/t)^{2p-1}}\,\frac{dx}{t} \, dt \\
&  = \frac{p(p-1)}{2p-1}  \int_0^\infty \bigg(\frac{1}{t}\int_0^t f(w)\,dw\bigg)^p \int_1^\infty \frac{(u-1)^{p-2}}{u^{2p-1}}\,du \, dt \\
&  = \frac{p(p-1)}{2p-1} B(p,p-1) \int_0^\infty \bigg(\frac{1}{t}\int_0^t f(w)\,dw\bigg)^p  dt\\
&  = \frac{p(p-1)}{2p-1} \frac{\Gamma(p)\Gamma(p-1)}{\Gamma(2p-1)} \|\mathcal{C}_1f\|_p^p  \\
&   = \frac{B(p+1,p-1)}{B(1,p-1)}\|\mathcal{C}_1f\|_p^p,
\end{align*}
where we have applied of Tonelli's theorem again in the first equality and a change of variable $u=x/t$ in the third equality. Hence
$$\|\mathcal{C}_1 f \|_p^p \leq  \frac{1}{2^p} \frac{B(1,p-1)}{B(p+1,p-1)} \|\mathcal{C}_{2} f \|_p^p,$$
for all $1<p<2$ and for all $f \in C_c^+(\mathbb{R}^+)$. 

\medskip

Now we turn to the case $2 \leq p < \infty$. Let $ x>0$ and let $\varphi_x:(0,x] \to \mathbb{R}^+$ be the function defined by 
$$ \varphi_x(v)= \bigg( \int_0^v f(t)(x-t)\,dt\bigg)^{p-1}.$$ 
By applying the Fundamental Theorem of Calculus we obtain that
\begin{align*}
\bigg( \int_0^x f(t)(x-t)\,dt\bigg)^{p-1} &=\varphi_x(x) =\varphi_x(x)-\varphi_x(0)=\int_0^x \varphi_x'(v)\,dv \\
& = (p-1) \int_0^x \bigg( \int_0^v f(t)(x-t)\,dt\bigg)^{p-2}f(v)(x-v)\, dv \\
& \geq (p-1) \int_0^x \bigg( \int_0^v f(t)\,dt\bigg)^{p-2}f(v)(x-v)^{p-1}\, dv,
\end{align*}
for all $x>0$. Therefore
\begin{align*}
2^{-p} \|\mathcal{C}_{2}f\|_p^p &=\frac{p}{2p-1}\int_0^\infty \frac{1}{x^{2p-1}}\bigg(\int_0^x f(u)\,du\bigg)\bigg(\int_0^x f(t)(x-t)\,dt\bigg)^{p-1}\, dx \\
&  \geq \frac{p(p-1)}{2p-1}\int_0^\infty \frac{1}{x^{2p-1}}\bigg(\int_0^x f(u)\,du\bigg)\int_0^x \bigg( \int_0^v f(t)\,dt\bigg)^{p-2}f(v)(x-v)^{p-1}\, dv\, dx \\
& =\frac{p(p-1)}{2p-1} \int_0^\infty f(v) \bigg( \int_0^v f(t)\,dt\bigg)^{p-2} \int_v^\infty \frac{(x-v)^{p-1}}{x^{2p-1}}\bigg(\int_0^x f(u)\,du\bigg) \,dx \,dv \\
& = \frac{p(p-1)}{2p-1} \int_0^\infty  \bigg( \int_0^v f(t)\,dt\bigg)^{p-1} \int_v^\infty \frac{(x-v)^{p-2}}{x^{2p-1}}\bigg(\int_0^x f(u)\,du\bigg) \,dx \,dv \\
&  \geq \frac{p(p-1)}{2p-1} \int_0^\infty  \bigg( \int_0^v f(t)\,dt\bigg)^{p} \int_v^\infty \frac{(x-v)^{p-2}}{x^{2p-1}} \,dx \,dv \\
&  = \frac{B(p+1,p-1)}{B(1,p-1)}\|\mathcal{C}_1f\|_p^p,
\end{align*}
where we have used integration by parts (taking $ds = f(v) \big(\int_0^v f(t)dt\big)^{p-2} dv$) in the third equality. Thus
$$\|\mathcal{C}_1 f \|_p^p \leq  \frac{1}{2^p} \frac{B(1,p-1)}{B(p+1,p-1)} \|\mathcal{C}_{2} f \|_p^p,$$
for all $2 \leq p < \infty$ and for all $f \in C_c^+(\mathbb{R}^+)$.  This completes the proof of the inequality in \eqref{IneqCnCn+1}. The proof of the optimality of the constant in \eqref{IneqCnCn+1} follows from~\eqref{Limit1}.
\hfill $\square$

\medskip

\noindent\textit{\textbf{Proof of Theorem \ref{Theorem3.4.}.}}
Let $1 < p < \infty$ and let $f \in C_c^+(\mathbb{R}^+)$. We start by assuming that $1<p<2$. Integration by parts leads to
\begin{align*}
2^{-p} \|\mathcal{C}_2^*f \|_p^p &= \int_0^\infty \bigg(\int_x^\infty f(t) (t-x) \frac{dt}{t^2}\bigg)^p dx= \int_0^\infty x^p \bigg(\frac{1}{x}\int_x^\infty f(t) (t-x) \frac{dt}{t^2}\bigg)^p dx \\
& =\frac{p}{p+1} \int_0^\infty x^{p+1} \bigg(\frac{1}{x}\int_x^\infty f(t) (t-x) \frac{dt}{t^2}\bigg)^{p-1} \bigg(\frac{1}{x^2}\int_x^\infty \frac{f(v)}{v}\, dv\bigg)\,dx \\
& = \frac{p}{p+1} \int_0^\infty \bigg(\int_x^\infty \frac{f(t)}{t} \bigg(1-\frac{x}{t}\bigg)\, dt \bigg)^{p-1}\bigg(\int_x^\infty \frac{f(v)}{v} \,dv\bigg)\,dx.
\end{align*}
Now, applying Jensen's inequality, we have that
\begin{align*}
\bigg(\int_x^\infty \frac{f(t)}{t} \bigg(1-\frac{x}{t}\bigg)\, dt \bigg)^{p-1} \geq \bigg (\int_x^\infty \frac{f(v)}{v}\,dv\bigg)^{p-2} \int_x^\infty \frac{f(t)}{t} \bigg(1-\frac{x}{t}\bigg)^{p-1}\, dt,
\end{align*}
for all $x>0$. Therefore 
\begin{align*}
2^{-p} \|\mathcal{C}_2^*f \|_p^p & = \frac{p}{p+1} \int_0^\infty \bigg(\int_x^\infty \frac{f(v)}{v} \,dv\bigg) \bigg(\int_x^\infty \frac{f(t)}{t} \bigg(1-\frac{x}{t}\bigg)\, dt \bigg)^{p-1}\,dx\\ 
& \geq \frac{p}{p+1}\int_0^\infty \bigg(\int_x^\infty \frac{f(v)}{v}\,dv\bigg)^{p-1} \int_x^\infty \frac{f(t)}{t} \bigg(1-\frac{x}{t}\bigg)^{p-1}\, dt \, dx \\
& = \frac{p}{p+1} \int_0^\infty \frac{f(t)}{t} \int_0^t \bigg(\int_x^\infty \frac{f(v)}{v}\,dv\bigg)^{p-1} \bigg(1-\frac{x}{t}\bigg)^{p-1}\,dx\,dt \\
& =\frac{p(p-1)}{p+1} \int_0^\infty \bigg(\int_t^\infty \frac{f(u)}{u}\,du\bigg) \int_0^t \bigg(\int_x^\infty \frac{f(v)}{v}\,dv\bigg)^{p-1} \bigg(1-\frac{x}{t}\bigg)^{p-2}\frac{x}{t^2}\,dx\,dt \\
& \geq \frac{p(p-1)}{p+1} \int_0^\infty \bigg(\int_t^\infty \frac{f(u)}{u}\,du\bigg)^p \int_0^t  \bigg(1-\frac{x}{t}\bigg)^{p-2}\frac{x}{t}\,\frac{dx}{t}\,dt \\
& = \frac{p(p-1)}{p+1} \int_0^\infty \bigg(\int_t^\infty \frac{f(u)}{u}\,du\bigg)^p \int_0^1  (1-u)^{p-2}u\, du \,dt \\
& = \frac{p(p-1)}{p+1} B(p-1,2) \int_0^\infty \bigg(\int_t^\infty \frac{f(u)}{u}\,du\bigg)^p\,dt \\
& = \frac{p(p-1)}{p+1} \frac{\Gamma(p-1)\Gamma(2)}{\Gamma(p+1)} \int_0^\infty \bigg(\int_t^\infty \frac{f(u)}{u}\,du\bigg)^p\,dt = \frac{1}{p+1} \|\mathcal{C}_1^*f \|_p^p,
\end{align*}
where in the second equality we have applied Tonelli's theorem, in the third equality we used  integration by parts (with $ds = f(t)/t \,dt$), and in the fourth equality we took into account the change of variables $u=x/t$. Hence
$$\|\mathcal{C}_1^* f \|_p^p \leq  \frac{p+1}{2^p} \|\mathcal{C}_{2}^* f \|_p^p,$$ 
for all $1<p<2$ and for all $f\in \mathcal{M}^+(\mathbb{R}^+)$. 

Now we turn to the case $2\leq p <\infty$. Let $x>0$ and let $\varphi_x:(0,x] \to \mathbb{R}^+$ be the function defined by 
$$\varphi_x(v)=\bigg(\int_v^\infty f(t)(t-x) \frac{dt}{t^2}\bigg)^{p-1}.$$
By taking the Fundamental Theorem of Calculus into account, we have 
\begin{align*}
\bigg(\int_x^\infty f(t)(t-x) \frac{dt}{t^2}\bigg)^{p-1} &= -\big(\varphi_x(\infty)-\varphi_x(x)\big)= -\int_x^\infty \varphi_x'(v)\,dv \\
&=(p-1)\int_x^\infty \bigg(\int_v^\infty f(t)(t-x) \frac{dt}{t^2}\bigg)^{p-2} f(v)(v-x)\,  \frac{dv}{v^2} \\
&=(p-1)\int_x^\infty \bigg(\int_v^\infty \frac{f(t)}{t}\bigg(1-\frac{x}{t}\bigg)\, dt \bigg)^{p-2} \frac{f(v)}{v}\bigg(1-\frac{x}{v}\bigg)\, dv \\
& \geq (p-1)\int_x^\infty \bigg(\int_v^\infty \frac{f(t)}{t}\, dt \bigg)^{p-2} \frac{f(v)}{v}\bigg(1-\frac{x}{v}\bigg)^{p-1}\, dv,
\end{align*}
for all $x>0$. Hence
\begin{align*}
2^{-p} \|\mathcal{C}_2^*f \|_p^p & = \frac{p}{p+1} \int_0^\infty \bigg(\int_x^\infty \frac{f(u)}{u} \,du\bigg) \bigg(\int_x^\infty \frac{f(t)}{t} \bigg(1-\frac{x}{t}\bigg)\, dt \bigg)^{p-1}\,dx\\
& \geq \frac{p(p-1)}{p+1}  \int_0^\infty \bigg(\int_x^\infty \frac{f(u)}{u} \,du\bigg) \int_x^\infty \bigg(\int_v^\infty \frac{f(t)}{t}\, dt \bigg)^{p-2} \frac{f(v)}{v}\bigg(1-\frac{x}{v}\bigg)^{p-1}\, dv \, dx \\
& = \frac{p(p-1)}{p+1}  \int_0^\infty \bigg(\int_x^\infty \frac{f(u)}{u} \,du\bigg) \int_x^\infty \bigg(\int_v^\infty \frac{f(t)}{t}\, dt \bigg)^{p-1}\bigg(1-\frac{x}{v}\bigg)^{p-2}\frac{x}{v^2}\, dv \, dx \\
& = \frac{p(p-1)}{p+1}  \int_0^\infty  \bigg(\int_v^\infty \frac{f(t)}{t}\, dt \bigg)^{p-1}  \int_0^v \bigg(\int_x^\infty \frac{f(u)}{u} \,du\bigg) \bigg(1-\frac{x}{v}\bigg)^{p-2}\frac{x}{v^2}\, dx \, dv \\
& \geq \frac{p(p-1)}{p+1}  \int_0^\infty  \bigg(\int_v^\infty \frac{f(t)}{t}\, dt \bigg)^{p}  \int_0^v \bigg(1-\frac{x}{v}\bigg)^{p-2}\frac{x}{v^2}\, dx \, dv \\
&  = \frac{p(p-1)}{p+1}  \int_0^\infty  \bigg(\int_v^\infty \frac{f(t)}{t}\, dt \bigg)^{p}  \int_0^v \bigg(1-\frac{x}{v}\bigg)^{p-2}\frac{x}{v}\, \frac{dx}{v} \, dv \\
&  = \frac{p(p-1)}{p+1}  \int_0^\infty  \bigg(\int_v^\infty \frac{f(t)}{t}\, dt \bigg)^{p}  \int_0^1 (1-u)^{p-2}u \, du \, dv \\
& = \frac{p(p-1)}{p+1}  B(p-1,2) \|\mathcal{C}_1^*f \|_p^p  =  \frac{1}{p+1}  \|\mathcal{C}_1^*f \|_p^p,
\end{align*}
where we have applied integration by parts in the second equality, Tonelli's theorem in the third equality, the change of variable $u=x/t$ in the fourth equality. Therefore
$$\|\mathcal{C}_1^* f \|_p^p \leq  \frac{p+1}{2^p} \|\mathcal{C}_{2}^* f \|_p^p,$$
for all $2 \leq p < \infty$ and for all $f \in C_c^+(\mathbb{R}^+)$.  This completes the proof of the inequality in \eqref{IneqCn*Cn+1*}. The proof that the constant in \eqref{IneqCnCn+1} is optimal follows directly from \eqref{Limit3}.
\hfill $\square$
\vspace{\topsep}

\section{Further Comments}
\label{FurtherComments}
The table below summarizes our results for the norms of the inclusion operators
\begin{gather*}
\Lambda_n^p(\mathbb{R}^+) \hookrightarrow \Lambda_{n+k}^p(\mathbb{R}^+) \quad (1 < p \le \infty), \\
\Lambda_n^{*p}(\mathbb{R}^+) \hookrightarrow \Lambda_{n+k}^{*p}(\mathbb{R}^+) \quad (1 \le p < \infty).
\end{gather*}

\begin{table}[h]
\centering
\renewcommand{\arraystretch}{2}
\begin{tabularx}{\textwidth}{|l|*2{>{\centering\arraybackslash}X|}}
\cline{2-3}
\multicolumn{1}{c|}{} & $\begin{array}{c} \displaystyle \|i\|_{\Lambda_n^p(\mathbb{R}^+) \to \Lambda_{n+k}^p(\mathbb{R}^+)} \\[-10pt] (1 < p \le \infty) \end{array}$ & $\begin{array}{c} \displaystyle \|i\|_{\Lambda_n^{*p}(\mathbb{R}^+) \to \Lambda_{n+k}^{*p}(\mathbb{R}^+)} \\[-10pt] (1 \le p < \infty) \end{array}$ \\ \hline
$(n,k) \in \mathbb{N} \times \mathbb{N} $ & $\begin{array}{c} \displaystyle \frac{B\left(n + \frac{1}{p'}, k\right)}{B(n+1, k)} \end{array} \vphantom{\begin{array}{c} X \\ (1 < p \le \infty) \end{array}}$ & $\begin{array}{c} \displaystyle \frac{B\left(n + \frac{1}{p}, k\right)}{B(n+1, k)} \end{array} \vphantom{\begin{array}{c} X \\ (1 \le p < \infty) \end{array}}$ \\ \hline
\end{tabularx}
\end{table}

Let $n,k \in \mathbb{N}$. We have seen in Theorem~\ref{Theorem3.2.} that the norm of the inclusion operator
\[
\|i\|_{\Lambda_{n+k}^\infty(\mathbb{R}^+) \to \Lambda_n^\infty(\mathbb{R}^+)} = \frac{(n-1)^{n-1}}{n^{n-1}} \frac{(n+k)^{n+k-1}}{(n+k-1)^{n+k-1}}.
\]

The following table summarizes the norms of the inclusion operators:
\begin{gather*}
\Lambda_{n+k}^p(\mathbb{R}^+) \hookrightarrow \Lambda_n^p(\mathbb{R}^+) \quad \text{for all } 1 < p < \infty, \\
\Lambda_{n+k}^{*p}(\mathbb{R}^+) \hookrightarrow \Lambda_n^{*p}(\mathbb{R}^+) \quad \text{for all } 1 \le p < \infty,
\end{gather*}
for all $n, k \in \mathbb{N}$.

\medskip

\begin{table}[h]
\centering
\renewcommand{\arraystretch}{2}
\begin{tabularx}{\textwidth}{|c|*6{>{\centering\arraybackslash}X|}}
\cline{2-7}
\multicolumn{1}{c|}{} & \multicolumn{3}{c|}{$\begin{array}{c} \displaystyle \|i\|_{\Lambda_{n+k}^p(\mathbb{R}^+) \to \Lambda_{n}^p(\mathbb{R}^+)} \\[-10pt] (1 < p < \infty) \end{array}$} & \multicolumn{3}{c|}{$\begin{array}{c} \displaystyle \|i\|_{\Lambda_{n+k}^{*p}(\mathbb{R}^+) \to \Lambda_{n}^{*p}(\mathbb{R}^+)} \\[-10pt] (1 \le p < \infty) \end{array}$} \\ \cline{2-7}
\multicolumn{1}{c|}{} & \multicolumn{1}{c|}{$1<p<2$} & \multicolumn{1}{c|}{$p=2$} & \multicolumn{1}{c|}{$2<p<\infty$} & \multicolumn{1}{c|}{$1\leq p<2$} & \multicolumn{1}{c|}{$p=2$} & \multicolumn{1}{c|}{$2<p<\infty$} \\ \hline
$(n,k)=(1,1)$ & \multicolumn{3}{c|}{$\begin{array}{c} \displaystyle{C(n,k,p)} \end{array} \vphantom{\begin{array}{c} X \\ (1 < p \le \infty) \end{array}}$} & \multicolumn{3}{c|}{$\begin{array}{c} \displaystyle C^*(n,k,p) \end{array} \vphantom{\begin{array}{c} X \\ (1 \le p < \infty) \end{array}}$} \\ \hline
\makecell{$(n,k) \in \mathbb{N} \times \mathbb{N}$ \\ $(n,k) \ne (1,1)$} & \multicolumn{1}{c|}{$\begin{array}{c} \vphantom{\displaystyle \bigg(\frac{n}{n + k}\bigg)^p \ \frac{p(n + k-1)+1}{p(n-1)+1}} \end{array}$ \vphantom{$\begin{array}{c} X \\ (1 < p \leq \infty) \end{array}$}\hspace{-20pt} ? } & $\displaystyle C(n,k,p)$ & ? & \multicolumn{1}{c|}{$\begin{array}{c} \vphantom{\displaystyle \bigg(\frac{n}{n + k}\bigg)^p \ \frac{p(n + k-1)+1}{p(n-1)+1}} \end{array}$ \hspace{-20pt} ? } & $\displaystyle C^*(n,k,p)$ & ? \\ \hline
\end{tabularx}
\end{table}

\medskip
Here, the symbol ``?'' indicates that the exact value of the corresponding norm remains unknown, and $C(n,k,p)$ and $C^*(n,k,p)$ are defined for $n, k \in \mathbb{N}$ and $1 < p < \infty$ by
\[
C(n,k,p) = \left(\frac{n}{n+k}\right)^p \frac{B\big((n - 1)p + 1, p - 1\big)}{B\big((n + k - 1)p + 1, p - 1\big)}
\]
and
\[
C^*(n,k,p) = \left(\frac{n}{n + k}\right)^p \frac{p(n + k - 1) + 1}{p(n - 1) + 1}.
\]

\medskip

By adapting the arguments from Theorems~\ref{Theorem3.3.} and~\ref{Theorem3.4.}, together with the semigroup property of the Riemann--Liouville fractional integral operators (see \cite[Theorem~2.2]{Kai}), it can be shown that
\begin{equation}\label{question1}
0 < \|i\|_{\Lambda_{n+k}^p(\mathbb{R}^+) \to \Lambda_n^p(\mathbb{R}^+)} < \infty
\end{equation}
and
\begin{equation}\label{question2}
0 < \|i\|_{\Lambda_{n+k}^{*p}(\mathbb{R}^+) \to \Lambda_n^{*p}(\mathbb{R}^+)} < \infty
\end{equation}
for all $1 < p < \infty$ and $n, k \in \mathbb{N}$, with $p \neq 2$ and  $(n, k) \ne (1, 1)$. Unfortunately, this approach does not yield the exact values of these inclusion norms.

\medskip

In any case, since the operator $\mathcal{C}_m$ and its adjoint $\mathcal{C}_m^*$ are defined for all real number $m > 0$ (and, for instance, the inequalities \eqref{BoundednessC} and \eqref{BoundednessC*} both hold for any $m > 0$), and in view of the tables above, it is natural to state the following general conjecture, which generalizes the main results of this paper:

\bigskip

\noindent\textbf{Conjecture.} Let $1 < p < \infty$ and $n, k > 0$ be real numbers. Then:
\begin{align*}
    \|i\|_{\Lambda_n^p(\mathbb{R}^+) \to \Lambda_{n+k}^p(\mathbb{R}^+)} & =\frac{B\big(n + \frac{1}{p'}, k\big)}{B(n+1, k)}, \\
    \|i\|_{\Lambda_n^{*p}(\mathbb{R}^+) \to \Lambda_{n+k}^{*p}(\mathbb{R}^+)}&=\frac{B\big(n + \frac{1}{p}, k\big)}{B(n+1, k)}, \\
\|i\|_{\Lambda_{n+k}^p(\mathbb{R}^+) \to \Lambda_n^p(\mathbb{R}^+)}^p &= \left( \frac{n}{n+k} \right)^p \frac{B\big(p(n-1)+1, 1\big)}{B\big(p(n+k-1)+1, 1\big)}, \\[1ex]
\|i\|_{\Lambda_{n+k}^{*p}(\mathbb{R}^+) \to \Lambda_n^{*p}(\mathbb{R}^+)}^p & = \bigg(\frac{n}{n + k}\bigg)^p \ \frac{p(n + k-1)+1}{p(n-1)+1}.
\end{align*}

\medskip

\noindent\textbf{Acknowledgements:} The first author extends his sincere gratitude to his PhD supervisors, Santiago Boza and Javier Soria, for their continuous guidance and support throughout his doctoral studies. 

\medskip

\noindent\textbf{Funding:} The first author was partially supported by grants PID2024-155917NB-I00, funded by MCIN/AEI/10.13039/501100011033. The second named author has been partially supported by Project PID2022-137294NB-I00 and PID2025-169474NB-C21 of Ministry of Science and Innovation of Spain.

\medskip

\noindent\textbf{Data availability:} No data were used for the research described in the article.

\medskip

\noindent\textbf{Conflicts of Interest:} The authors declare that they have no conflict of interest.

\end{document}